\documentclass[table]{amsart}
\usepackage[margin=1.1in]{geometry}
\usepackage{amscd,amsmath,amsxtra,amsthm,amssymb,stmaryrd,xr,mathrsfs,mathtools,enumerate,commath, comment, mathtools, orcidlink}
\usepackage{enumitem}
\usepackage{booktabs}
\usepackage{xcolor}
\usepackage{hyperref}
\usepackage{todonotes}
\definecolor{darkgoldenrod}{rgb}{0.72,0.53,0.04}
\definecolor{goldmetallic}{rgb}{0.83,0.69,0.22}
\hypersetup{
 colorlinks=true,
 linkcolor=darkgoldenrod,
 filecolor=brown,
 urlcolor=goldmetallic,
 citecolor=darkgoldenrod
}

\newtheorem{theorem}{Theorem}[section]
\newtheorem{lemma}[theorem]{Lemma}
\newtheorem{proposition}[theorem]{Proposition}

\newtheorem{question}[theorem]{Question}

\newcommand{\GL}{\operatorname{GL}}
\newcommand{\Ind}{\operatorname{Ind}}
\newcommand{\Irr}{\operatorname{Irr}}
\newcommand{\St}{\operatorname{St}}
\newcommand{\diag}{\operatorname{diag}}

\newcommand{\F}{\mathbb F}
\newcommand{\C}{\mathbb C}
\newcommand{\Q}{\mathbb Q}
\newcommand{\Z}{\mathbb Z}
\newcommand{\cO}{\mathcal O}

\newcommand{\mtx}[4]{\begin{pmatrix}#1&#2\\#3&#4\end{pmatrix}}
\newcommand{\op}[1]{\operatorname{#1}}

\numberwithin{equation}{section}

\title[Average divisibility in character tables of \(\GL_2(\F_q)\)]{Average divisibility in character tables of \(\GL_2(\F_q)\)}
\author[A.~Ray]{Anwesh Ray\, \orcidlink{0000-0001-6946-1559}}
\address[A.~Ray]{Chennai Mathematical Institute, H1, SIPCOT IT Park, Kelambakkam, Siruseri, Tamil Nadu 603103, India}
\email{anwesh@cmi.ac.in}

\author[M.~Ray]{Mishty Ray\, \orcidlink{0000-0003-0090-3966}}
\address[M.~Ray]{1984 Mathematics Rd, Vancouver, BC V6T 1Z2, Canada}
\email{mishtyray@math.ubc.ca}

\keywords{Character tables, finite general linear groups, divisibility, distribution questions}
\subjclass[2020]{20C33, 20C15, 11R45}

\begin{document}

\begin{abstract}
Let \(q\) range over odd prime powers and let \(G_q=\GL_2(\F_q)\).  Fix a
prime number \(\ell\).  Motivated by work of Peluse and Soundararajan on
Miller's conjecture for character tables of symmetric groups, we study the
proportion of entries in the character table of \(G_q\) which are not
divisible by \(\ell\), in the sense of divisibility in the ring of algebraic
integers. We prove that
$N_\ell(q)=\frac{q^4}{2}+O_\epsilon(q^{3+\epsilon})$
for every \(\epsilon>0\), where \(N_\ell(q)\) denotes the number of entries
which are not divisible by \(\ell\). We also show that the number of zero
entries is $\frac{q^4}{2}+O_\epsilon(q^{3+\epsilon})$. Consequently, the proportion of all entries not divisible by \(\ell\) tends
to \(1/2\), while the proportion of nonzero entries not divisible by
\(\ell\) tends to \(1\).  This differs significantly from the symmetric-group case, where almost every character-table entry is divisible by any fixed prime. We also prove an angular equidistribution result for the nonzero character
values as \(q\to\infty\). We show that the arguments become equidistributed in
\([0,2\pi]\). This proves an analogue of Miller's
question on the distribution of signs among the nonzero entries in character
tables of symmetric groups.
\end{abstract}

\maketitle

\section{Introduction}
The characters of irreducible complex representations of finite groups exhibit interesting arithmetic behavior, especially when considered in infinite families. Note that the character values of finite groups are algebraic integers, in fact, they are sums of roots of unity. One may study the average arithmetic behavior of character tables for a family of finite groups and ask how often the entries satisfy a prescribed congruence or
divisibility condition. The most prominent example is the family of
symmetric groups \(S_N\). For this family, the character values are integers. The irreducible characters and conjugacy classes
of \(S_N\) are both parametrized by partitions of \(N\), and hence the
character table has \(p(N)^2\) entries, where \(p(N)\) denotes the number of partitions of $N$.
\par Miller \cite{Millerpaper} conjectured, based on extensive computations, that for
every fixed prime number \(\ell\), almost every entry in the character table of
\(S_N\) is divisible by \(\ell\) as \(N\to\infty\).  This conjecture was proved
by Peluse and Soundararajan \cite{PS1}.  They showed that
if a prime \(\ell\leq (\log N)/(\log\log N)^2\), then the number of entries in the
character table of \(S_N\) which are not divisible by \(\ell\) is $O\left(p(N)^2N^{\frac{-1}{12\ell}}\right)$. In particular, for every fixed prime \(\ell\), the proportion of entries in the
character table of \(S_N\) not divisible by \(\ell\) tends to \(0\) as $N\rightarrow \infty$. Prior to this, the conjecture was known for $\ell\leq 13$, cf. \cite{McKay,Pelusepreprint}. Subsequently, Peluse and Soundararajan \cite{PS2} proved the stronger prime-power form of Miller's conjecture. Namely, they showed that for every fixed prime power \(\ell^r\), almost every entry in the character table of \(S_N\) is divisible by \(\ell^r\) as \(N\to\infty\). Such results rely on both combinatorial methods as well as tools from analytic number theory; they underscore the growing scope of arithmetic statistics in representation theory.

\par The purpose of the present paper is to investigate analogous questions
for a different natural family of finite groups, namely $G_q=\GL_2(\F_q)$,
where \(q\) ranges over powers of odd primes. We shall see that the answer for these groups is
different from that of the family of symmetric groups $S_N$ as $N\rightarrow \infty$. We fix a prime number $\ell$. Instead of almost every
entry being divisible by $\ell$, we show that the limiting proportion
of entries not divisible by $\ell$ is exactly \(1/2\). For each odd prime power \(q\), let
\(N_\ell(q)\) denote the number of entries in the character table of
\(G_q\) which are not divisible by \(\ell\), in the sense of divisibility in
the ring of algebraic integers. Let $M_0(q)$ (resp. $M_0'(q)$) be the number of entries in the character table that are equal (resp. not equal) to $0$. The total number of entries in the character table of $G_q$ is \[M_0(q)+M_0'(q)=(q^2-1)^2.\]
We define
\[
\mathfrak d_\ell(q)
:=
\frac{N_\ell(q)}{(q^2-1)^2}\quad \text{and} \quad \mathfrak a_\ell(q):=\frac{N_\ell(q)}{M_0'(q)}
\]
and prove the following asymptotic formula.

\begin{theorem}\label{thm:main}
Let \(\ell\) be a fixed rational prime.  As \(q\to\infty\) over odd prime
powers, one has
\[
\lim_{q\to\infty}\mathfrak d_\ell(q)=\frac12\quad \text{and}\quad \lim_{q\to\infty}\mathfrak a_\ell(q)=1.
\]
That is, asymptotically one half of the entries in the character table
of \(G_q\) are not divisible by \(\ell\), and one half are divisible
by \(\ell\). Further, most nonzero entries are not divisible by $\ell$. 
\end{theorem}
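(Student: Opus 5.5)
We will read the statement off the explicit character table of \(G_q\), which we take as known. There are four families of conjugacy classes: \(q-1\) central classes \(aI\); \(q-1\) non-semisimple classes \(aI+N\) with \(N\ne0\) nilpotent; \(\binom{q-1}{2}\) split semisimple classes \(\diag(a,b)\) with \(a\neq b\); and \(\binom{q}{2}\) elliptic classes with eigenvalues \(z,z^q\), where \(z\in\F_{q^2}\setminus\F_q\). There are likewise four families of irreducible characters: \(q-1\) one-dimensional characters \(\alpha\circ\det\); \(q-1\) Steinberg twists of degree \(q\); \(\tfrac12(q-1)(q-2)\) principal series characters of degree \(q+1\); and \(\tfrac12 q(q-1)\) cuspidal characters of degree \(q-1\). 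The plan is to sort the \(16\) blocks of the table into three kinds: blocks of size \(O(q^3)\), which we discard; blocks that are identically zero; and large blocks whose entries we must examine. Only two blocks have size \(\asymp q^4\). The first is principal series characters on elliptic classes and the second is cuspidal characters on split classes. Both are identically zero, and they contribute \(\tfrac12 q^2\cdot\tfrac12 q^2\cdot 2=\tfrac12 q^4+O(q^3)\) zero entries. The remaining \(\asymp q^4\) blocks are principal series characters on split classes, with entries \(\alpha(a)\beta(b)+\alpha(b)\beta(a)\), and cuspidal characters on elliptic classes, with entries \(-(\theta(z)+\theta(z)^q)\). Together these have \(\tfrac12 q^4+O(q^3)\) entries. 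Counting these blocks gives \(M_0(q)=\tfrac12q^4+O(q^3)\) once we show that only \(O(q^{3+\epsilon})\) entries in them vanish. It then suffices to prove that only \(O_\epsilon(q^{3+\epsilon})\) entries in these two blocks are divisible by \(\ell\). Both limits follow from this, since \(M_0'(q)=\tfrac12q^4+O(q^{3+\epsilon})\).

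In each of these two blocks the entry has the form \(\zeta+\zeta'\), where \(\zeta,\zeta'\) are roots of unity. For the principal series block, \(\zeta=\alpha(a)\beta(b)\) and \(\zeta'=\alpha(b)\beta(a)\). Write \(\omega=\zeta'/\zeta\), which is again a root of unity. Then \(\zeta+\zeta'\) is divisible by \(\ell\) exactly when \(1+\omega\) is divisible by \(\ell\). Now \(1+\omega=(1-\omega^2)/(1-\omega)\) when \(\omega\ne1\). A basic cyclotomic fact says that \(1-\xi\), for \(\xi\) a primitive \(m\)-th root of unity, is a unit unless \(m\) is a prime power \(p^k\), and in that case its norm is \(p\) with ramification index \(\phi(p^k)\). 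Using this, \(1+\omega\) can be divisible by \(\ell\) only in two situations. The first is \(\omega=1\) with \(\ell=2\). The second is that \(\omega^2\) has order \(\ell^k\) with \(\phi(\ell^k)\) small. For instance, when \(\omega=-1\) the entry is \(0\), which counts as divisible. Since the valuation of \(1-\xi\) at a prime above \(\ell\) equals \(1/\phi(\ell^k)\) in units where \(\ell\) has valuation \(1\), divisibility forces \(\omega\) to have order dividing \(2\ell\) or \(4\), and the precise list depends on \(\ell\). Hence every \(\ell\)-divisible entry satisfies \(\omega^{N}=1\) for a fixed \(N=N(\ell)\), namely \(N=4\ell\).

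It remains to count pairs (character, class) with \((\zeta'/\zeta)^{N}=1\). In the principal series block, \(\omega=(\alpha\beta^{-1})(b/a)\). Set \(\chi=\alpha\beta^{-1}\), a nontrivial character of \(\F_q^\times\), and \(t=b/a\ne1\). The condition \(\chi(t)^N=1\) says that \(\chi^N(t)=1\). For each nontrivial character \(\psi=\chi^N\), the number of \(t\) with \(\psi(t)=1\) equals \((q-1)/\operatorname{ord}\psi\). The number of \(\chi\) with \(\operatorname{ord}(\chi^N)=d\) is at most \(N\cdot d\), up to \(\gcd\) factors. Summing over \(d\mid q-1\), the number of pairs \((\chi,t)\) is therefore \(\ll q\,\tau(q-1)N\) beyond the \(O(q)\) pairs with \(\chi^N=1\), each of which can pair with every \(t\). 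Multiplying by the remaining \(O(q)\) free parameters \((\beta,a)\) gives \(O(q^2\tau(q-1))=O_\epsilon(q^{2+\epsilon})\) pairs, up to the extra factor of \(q\) in the exceptional pairs. In total the count is \(O_\epsilon(q^{3+\epsilon})\). The cuspidal block is handled in the same way: there \(\omega=\theta(z)^{q-1}=\theta^{q-1}(z)\) with \(\theta^{q-1}\ne1\), and we count solutions in the cyclic group \(\F_{q^2}^\times\) using the divisor bound for \(q^2-1\). The main obstacle is the cyclotomic step. We must pin down exactly when \(\ell\mid 1+\omega\), and in particular handle \(\ell=2\), where \(1+\omega\) for \(\omega\) of \(2\)-power order has small positive valuation and sums of this kind need care. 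The counting itself is a routine divisor-function bound, and we will keep it uniform by always reducing to \(\omega^{N(\ell)}=1\).
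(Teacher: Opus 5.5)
Your proposal is correct and has the same architecture as the paper. Both use the same block decomposition, let the two identically zero large blocks supply $\tfrac12q^4+O(q^3)$ zeros, reduce an entry $\zeta+\zeta'$ to $1+\omega$, and finish with a divisor-bound count lifted to $O_\epsilon(q^{3+\epsilon})$ entries.

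The real difference is the cyclotomic input. The paper proves the exact criterion of Lemma~\ref{lem:root-unity}: $\ell\mid 1+\zeta$ iff $\zeta=-1$, or $\ell=2$ and $\zeta=1$. It does this by computing $|N_{\Q(\zeta_m)/\Q}(1+\zeta_m)|=|\Phi_m(-1)|$, so afterwards it only has to count solutions of $\rho(t)=\pm1$. You instead extract only a necessary condition, from $v_\lambda(1+\omega)\le v_\lambda(1-\omega^2)$ and $v_\lambda(1-\xi)=1/\varphi(\ell^k)$. Carried through, this forces the order of $\omega^2$ to be $1$, or $2$ when $\ell=2$, so in fact $\omega^4=1$ and your condition $\omega^{4\ell}=1$ is certainly valid.

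Your route is shorter and more robust. Counting solutions of $\chi^N(t)=1$ loses only a factor $\gcd(N,q-1)\le N$, plus $O(Nq)$ pairs with $\chi^N=1$. So the precise list of bad $\omega$ never needs to be determined, including the $\ell=2$ subtlety you flag as the main obstacle. The paper's exact lemma gives a cleaner count with explicit constants. It also identifies the divisible entries of the two large nonzero blocks exactly: the zeros, together with the entries $2\zeta$ ($\zeta$ a root of unity) when $\ell=2$.

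A few slips in the write-up should be fixed.
\begin{enumerate}
\item All four blocks formed by the principal-series and cuspidal rows against the split and elliptic columns have about $q^4/4$ entries each, not only the two zero blocks.
\item Each pair $(\chi,t)$ lifts to $(q-1)^2$ quadruples $(\alpha,\beta,a,b)$, not $O(q)$. So the principal-series bound is $O(q\,\tau(q-1))\cdot(q-1)^2=O_\epsilon(q^{3+\epsilon})$ directly. Your final total is right, but the intermediate $O_\epsilon(q^{2+\epsilon})$ is not.
\item In the cuspidal block, the map $\theta\mapsto\theta^{N(q-1)}$ has fibres of size $(q-1)\gcd(N,q+1)$. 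So the split-case bound of ``at most $Nd$ characters'' must carry an extra factor $q-1$, or you can pass to $E^\times/\F_q^\times$ as the paper does. The result is still $O(q^3\tau(q+1))=O_\epsilon(q^{3+\epsilon})$.
\end{enumerate}
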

More precisely, we prove the following quantitative form of Theorem
\ref{thm:main}.  For every fixed prime number $\ell$ and every
$\epsilon>0$, one has
\[
N_\ell(q)=\frac{q^4}{2}+O_\epsilon(q^{3+\epsilon})
\]
as $q\to\infty$ over odd prime powers.  Moreover, the number $M_0(q)$ of
zero entries in the character table satisfies
\[
M_0(q)=\frac{q^4}{2}+O_\epsilon(q^{3+\epsilon}).
\]
For further details, we refer to Theorem \ref{explicit estimate theorem}. Thus the main contribution to the entries divisible by $\ell$ comes from
the zero entries, while almost all nonzero entries are not divisible by
$\ell$.
\par Our result also settles the question for powers of
\(\ell\). Indeed, if a nonzero algebraic integer is not divisible by
\(\ell\), then it is not divisible by any power \(\ell^r\). Since almost all
nonzero entries in the character table of \(G_q\) are not divisible by
\(\ell\), it follows a fortiori that, for every fixed \(r\geq 1\), almost all
nonzero entries are not divisible by \(\ell^r\).  Thus the limiting proportion
of all entries not divisible by \(\ell^r\) is again \(1/2\), and the limiting
proportion of nonzero entries not divisible by \(\ell^r\) is \(1\).

\par The ideas used in the proof are quite different from those in \cite{PS1}. Rather than relying on the combinatorics of partitions, our argument uses the standard classification of irreducible complex representations of $G_q=\GL_2(\F_q)$ into four families: namely one-dimensional, principal series, Steinberg twists, and cuspidal. On the conjugacy class side, there are also four natural families, namely scalar, split regular semisimple, elliptic, and Jordan (i.e., consisting of a single Jordan block). The character table naturally decomposes into \(4\times 4\) blocks,
with rows indexed by the four families of irreducible representations and
columns indexed by the four families of conjugacy classes:
\begin{table}[h]
    \centering
    \begin{tabular}{|c|c|c|c|c|}
    \hline
        & 
\text{scalar} &
\text{split regular semisimple} &
\text{elliptic} &
\text{non-semisimple Jordan}
\\
\hline
\hline
\text{one-dimensional}
&
$\ast$ & $\ast$ & $\ast$ & $\ast$
\\
\text{principal series}
&
$\ast$ & $\ast$ & $0$ & $\ast$
\\
\text{Steinberg twists}
&
$\ast$ & $\ast$ & $\ast$ & $0$
\\
\text{cuspidal}
&
$\ast$ & $0$ & $\ast$ & $\ast$\\ \hline
    \end{tabular}
    \caption{Representations and conjugacy classes}
    \label{table: Representations and conjugacy classes}
\end{table}

\iffalse
\begin{equation}\label{equation: array 1}
\begin{array}{c|c|c|c|c}
 & 
\text{scalar} &
\text{split regular semisimple} &
\text{elliptic} &
\text{non-semisimple Jordan}
\\
\hline
\text{one-dimensional}
&
\ast & \ast & \ast & \ast
\\
\text{principal series}
&
\ast & \ast & 0 & \ast
\\
\text{Steinberg twists}
&
\ast & \ast & \ast & \ast
\\
\text{cuspidal}
&
\ast & 0 & \ast & \ast
\end{array}
\end{equation}\fi
Here \(\ast\) denotes a block whose entries are not all zero. The one-dimensional representations and the Steinberg twists form small
families, each of size \(q-1\).  Since the total number of conjugacy classes
is \(q^2-1\), the entries lying in these rows contribute only \(O(q^3)\)
entries to the full character table. Thus, their contribution has density zero. Similarly, the scalar and Jordan
conjugacy classes each form a family of size \(q-1\), and therefore the
entries lying in these columns also have density zero.
\par Consequently, the limiting density is determined entirely by the four large
blocks obtained by evaluating principal series and cuspidal representations
on split regular semisimple and elliptic conjugacy classes.  The situation is
summarized in the following table:
\begin{table}[h]
    \centering
    \begin{tabular}{|c|c|c|}
        \hline

& \text{Split regular semisimple classes} & \text{Elliptic classes} \\
\hline
\hline
\text{Principal series representations}
&
\text{nonzero in general}
&
$0$
\\         
\text{Cuspidal representations}
&
$0$
&
\text{nonzero in general}\\
\hline
    \end{tabular}
    \caption{Summary of contributions to the density}
    \label{tab:placeholder}
\end{table}

\iffalse\[
\begin{array}{c|c|c}
 & \text{Split regular semisimple classes} & \text{Elliptic classes} \\
\hline
\text{Principal series representations}
&
\text{nonzero in general}
&
0
\\[4pt]
\text{Cuspidal representations}
&
0
&
\text{nonzero in general}
\end{array}
\]\fi

The $0$-blocks contribute to the density of entries divisible by \(\ell\).
The remaining two large blocks are the principal-series/split block and the
cuspidal/elliptic block.  Each of these blocks contributes a density of $\frac{1}{4}$ to the proportion of character entries that are not divisible by $\ell$.

\par Miller \cite[Question~3]{Millerzeros} also considered an archimedean analogue of the divisibility
question for the symmetric groups. Since the character values of \(S_N\)
are integers, one may ask how often the nonzero entries in the character
table of \(S_N\) are positive or negative. For
\(G_q=\GL_2(\F_q)\), the character values are in general complex rather than
real, so the corresponding question is not a sign-distribution question but
an angular-distribution question. We study how the arguments of the nonzero character values are distributed
in \([0,2\pi]\).
\par Let $\op{Irr}(G_q)$ be the set of isomorphism classes of complex irreducible representations of $G_q$. For a nonzero complex number \(z\), write $\operatorname{arg}(z)\in [0, 2\pi)$ for its argument. If \(I=[a,b]\subset [0,2\pi)\) is an interval, let
\[
P_q(I)
:=
\frac{
\#\{(\pi,C): \pi\in \Irr(G_q),\ C\text{ a conjugacy class of }G_q,\ 
\chi_\pi(C)\neq 0,\ \operatorname{arg}(\chi_\pi(C))\in I\}
}{
\#\{(\pi,C): \pi\in \Irr(G_q),\ C\text{ a conjugacy class of }G_q,\ 
\chi_\pi(C)\neq 0\}
}.
\]
Thus \(P_q(I)\) is the proportion of nonzero entries in the character table
whose argument lies in \(I\).

\begin{theorem}\label{thm:argument-equidistribution}
Let \(I=[a,b]\subset [0,2\pi]\).  As \(q\to\infty\) over odd prime powers,
\[
\lim_{q\to\infty} P_q(I)=\frac{b-a}{2\pi}.
\]
\end{theorem}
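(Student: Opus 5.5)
Since the statement concerns proportions as \(q\to\infty\), the plan is to reduce to the two large blocks of Table~\ref{tab:placeholder}. The denominator of \(P_q(I)\) is \(M_0'(q)\), which equals \(q^4/2+O_\epsilon(q^{3+\epsilon})\) by the estimate for \(M_0(q)\) stated after Theorem~\ref{thm:main}. Rows indexed by one-dimensional characters and Steinberg twists, and columns indexed by scalar and Jordan classes, contribute only \(O(q^3)\) entries. So it suffices to count, in the principal-series/split block and in the cuspidal/elliptic block, the entries with \(\chi\neq 0\) and \(\arg\chi\in I\), up to an error \(o(q^4)\).

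\textbf{Principal-series/split block.} For \(\pi=\Ind(\chi_1\otimes\chi_2)\) with \(\chi_1\neq\chi_2\) characters of \(\F_q^\times\), and the class of \(\diag(a,b)\) with \(a\neq b\), the character value is
\[
\chi_1(a)\chi_2(b)+\chi_1(b)\chi_2(a).
\]
I would parametrize by a generator \(g\) of \(\F_q^\times\). Write \(\chi_i(g)=e(m_i/(q-1))\), \(a=g^x\), \(b=g^y\), with \(e(t)=e^{2\pi i t}\). Then the value is
\[
2\cos\bigl(\pi(m_1-m_2)(x-y)/(q-1)\bigr)\, e\bigl((m_1+m_2)(x+y)/(2(q-1))\bigr).
\]
Its argument is \(\pi(m_1+m_2)(x+y)/(q-1)\), shifted by \(\pi\) when the cosine is negative. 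Setting \(u=m_1+m_2\), \(v=m_1-m_2\), \(s=x+y\), \(t=x-y\) (modulo \(q-1\), with parity constraints and the unordered-pair identifications, which cost only bounded multiplicities), the sign depends only on \((v,t)\) and the phase only on \((u,s)\). For fixed \((v,t)\), I need the equidistribution of \(us/(q-1)\bmod 2\) as \(u,s\) range over \(\Z/(q-1)\), or rather the parity-restricted ranges.

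This is a standard divisor count. The discrepancy of the multiples \(\{us/(q-1)\}\) is controlled by \(\gcd(u,q-1)\): for each \(u\), the values \(us\) run over multiples of \(d=\gcd(u,q-1)\), a set equidistributed at scale \(d/(q-1)\). Summing \(d\) over \(u\) gives \(\sum_{d\mid q-1} d\cdot\phi((q-1)/d)\ll q^{1+\epsilon}\). The Weyl-criterion discrepancy is therefore \(O(q^{-1+\epsilon})\), which is negligible. The vanishing entries, where the cosine is zero, are \(O(q^3)\).

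\textbf{Cuspidal/elliptic block.} For the cuspidal representation attached to a character \(\theta\) of \(\F_{q^2}^\times\) with \(\theta\neq\theta^q\), and an elliptic element \(z\in\F_{q^2}\setminus\F_q\), the value is
\[
-(\theta(z)+\theta(z^q)).
\]
Writing \(\theta(g')=e(m/(q^2-1))\) for a generator \(g'\) of \(\F_{q^2}^\times\) and \(z=g'^k\), this equals
\[
-2\cos\bigl(\pi m k(q-1)/(q^2-1)\bigr)\, e\bigl(mk(q+1)/(2(q^2-1))\bigr).
\]
The phase is \(\pi mk/(q-1)\); the cosine factor, which now depends on \(mk/(q+1)\), only flips by \(\pi\). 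Since \(\gcd(q-1,q+1)=2\), the CRT splitting \(\Z/(q^2-1)\approx\Z/(q-1)\times\Z/(q+1)\) (up to a factor of 2) decouples the two residues of \(mk\). The same divisor-sum equidistribution then gives the phase uniform independent of the sign, with error \(O(q^{-1+\epsilon})\).

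Combining the two blocks, each block gives the proportion \((b-a)/2\pi\) of its nonzero entries up to \(o(1)\). Together the two blocks make up \(1-o(1)\) of the denominator, which yields the theorem. The main obstacle is making the decoupling of sign and phase rigorous: in the split case this means handling the change of variables \((m_1,m_2,x,y)\mapsto(u,v,s,t)\), its parity constraints and the Weyl-group identifications. In the cuspidal case it means the CRT decoupling of \(mk\bmod(q-1)\) and \(\bmod(q+1)\) when the product \(mk\) is not itself a free variable. I would first try to handle the latter by fixing \(m\) and counting \(k\) in joint residue classes, using the \(\gcd(m,q^2-1)\) divisor bound to control the exceptional \(m\).
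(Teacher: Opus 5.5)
Your proposal is correct in outline, but it follows a different route from the paper. You work block by block in explicit coordinates. In the split block you factor each entry as a real factor $2\cos(\pi vt/(q-1))$, depending only on $(v,t)$, times a phase depending only on $(u,s)$, and you prove the phase is equidistributed on each sign class by gcd and divisor-sum estimates. In the cuspidal block you separate $w=mk$ by the CRT.

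The paper never computes a phase. It combines three facts:
\begin{enumerate}
\item the twisting identity $\chi_{\pi\otimes(\eta\circ\det)}(C)=\eta(\det C)\chi_\pi(C)$;
\item $X_q=\widehat{\F_q^\times}$ acts freely on $\Irr(G_q)$ outside a set $\mathcal B_q$ of size $O(q)$ (Lemma~\ref{mainlemmalastsection});
\item orthogonality: summed over a free orbit, $u(\chi_\pi(C))^n$ becomes $(q-1)\,u(\chi_{\pi_{\mathcal O}}(C))^n$ if $(\det C)^n=1$ and $0$ otherwise.
\end{enumerate}
At most $2|n|q$ classes satisfy $(\det C)^n=1$. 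This bounds the Weyl sum over the entire table by $O(|n|q^3)$ in one stroke, with no splitting into blocks and no lifts, parity constraints or CRT.

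Your decoupling mirrors this structure. In the split block, moving $u$ with $v$ fixed is exactly twisting $\chi_1,\chi_2$ by a common $\eta$, and $s$ encodes $\det C=ab$. In the cuspidal block, shifting $m$ by multiples of $q+1$ is twisting $\theta$ by $\eta\circ N_{E/\F_q}$. So the step you flag as the main obstacle is precisely the one the twisting argument sidesteps. In return, your computation is fully explicit, gives a direct discrepancy estimate, and needs neither Lemma~\ref{lem:twisting-cuspidal-parameters} nor the stabilizer analysis of Lemma~\ref{mainlemmalastsection}.

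\textbf{Points to fix when writing it out.}

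\emph{Split block.} The phase $\pi us/(q-1)$ depends on $us \bmod 2(q-1)$, so take $m_1,m_2,x,y$ modulo $2(q-1)$, not modulo $q-1$; every entry is then over-counted by the same factor. With this choice both factors are separately periodic, and $(m_1,m_2)\mapsto(u,v)$ is $2$-to-$1$ onto pairs of equal parity. For fixed $(v,t)$, the sum of $e(nus/(2(q-1)))$ over the relevant parity classes of $u,s$ is then $O(|n|q)$.

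\emph{Cuspidal block.} There is no lifting issue here. Since $2(q\pm1)\mid q^2-1$, both the sign of $\cos(\pi w/(q+1))$ and $e(w/(2(q-1)))$ are functions of $w\in\Z/(q^2-1)$. The map $w\mapsto(w\bmod 2(q+1),\,w\bmod 2(q-1))$ is injective, with image the pairs congruent modulo $4$. For fixed $m$, the full sum over $k\in\Z/(q^2-1)$ then vanishes unless $\gcd(m,q-1)\geq (q-1)/(2|n|)$, and there are only $O_\epsilon(|n|q^{1+\epsilon})$ such $m$.

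\emph{Zero entries.} The zero entries in the two large blocks number $O_\epsilon(q^{3+\epsilon})$ (Lemma~\ref{lemma quad count}), not $O(q^3)$. This is harmless for the argument.
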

\noindent For related work on character values of finite general linear groups, we
mention the work of Gurevich and Howe \cite{GurevichHowe}. The authors study asymptotics for the normalized trace $\frac{\op{tr}\pi(g)}{\dim \pi}$ via a harmonic-analytic approach.

\par We conclude by mentioning some natural further directions. The results of this paper suggest that the statistics of character tables can vary substantially with the family of finite groups under consideration. It would be interesting to study the analogous questions for other families of finite groups of Lie type. A first natural case is $\GL_n(\F_q)$, either with $n$ fixed and $q\to\infty$, or with $q$ fixed and $n\to\infty$.  The irreducible characters of $\GL_n(\F_q)$ were parametrized by Green \cite{Green}. More generally, Deligne--Lusztig theory \cite{DL76} provides a uniform framework for studying irreducible characters of finite reductive groups. We pose the following question for a fixed algebraic group and varying finite field.
\begin{question}\label{question:finite-lie-type-divisibility}
Let \(\mathcal G\) be a reductive group scheme over \(\mathbb Z\), and
write $G_q:=\mathcal G(\F_q)$
for its group of \(\F_q\)-points. 
\begin{enumerate}
    \item Fix a prime number \(\ell\). What is the
limiting proportion, as \(q\to\infty\) over prime powers, of entries in the
character table of \(G_q\) which are not divisible by \(\ell\)?  More
generally, what is the limiting proportion of nonzero entries which are not
divisible by \(\ell\)?
\item Is there a measure on $[0,2\pi]$ with respect to which the arguments of the non-zero character values of irreducible representations become equidistributed as $q\rightarrow \infty$?
\end{enumerate}
\end{question}

We also pose the analogous question in fixed characteristic, where the rank
of the group tends to infinity.

\begin{question}\label{question:fixed-field-growing-rank}
Let \(\F_q\) be a fixed finite field, and let \(\{G_n\}_{n\geq 1}\) be one
of the standard families of finite classical groups, for instance $\GL_n(\F_q)$, $\op{SL}_n(\F_q)$, $\mathrm U_n(\F_q)$, $\op{SO}_n(\F_q)$ or $\op{Sp}_{2n}(\F_q)$.
\begin{enumerate}
    \item Fix a prime number \(\ell\).  What is the limiting proportion, as
    \(n\to\infty\), of entries in the character table of \(G_n\) which are
    not divisible by \(\ell\)?  More generally, what is the limiting
    proportion of nonzero entries which are not divisible by \(\ell\)?

    \item After discarding the zero entries, is there a natural probability
    measure on \([0,2\pi]\) with respect to which the arguments of the
    remaining character values become equidistributed as \(n\to\infty\)?
\end{enumerate}
\end{question}

\subsection*{Acknowledgments}

We thank Mathilde Gauthier, Sarah Peluse, Dipendra Prasad and Steven Spallone for helpful feedback on our paper.  We are especially
grateful to Sarah Peluse for suggesting that we study the equidistribution
of arguments of nonzero character values, and for pointing us to Miller's
question on the distribution of signs among the nonzero entries in character
tables of symmetric groups.

\section{Preliminaries}

\subsection{Conjugacy classes in \(\GL_2(\F_q)\)}\label{s 2.1}

Throughout the paper \(q\) is a power of an odd prime number and \(G_q=\GL_2(\F_q)\).  We write \(B\), \(T\), \(N\), and \(Z\) for the standard Borel subgroup, diagonal torus, unipotent subgroup, and center, respectively. In particular, \(B\) consists of the upper triangular matrices in \(G_q\), \(T\) of the diagonal matrices, \(N\) of the matrices \(\mtx{1}{x}{0}{1}\), with \(x\in\F_q\), and \(Z\) of the scalar matrices.

The conjugacy classes of \(G_q\) are divided into four families.  First, there are the scalar classes \(aI\), where \(a\in\F_q^\times\).  Second, there are the split regular semisimple classes, represented by \(\diag(a,b)\), where \(a,b\in\F_q^\times\) and \(a\neq b\).  These are parametrized by unordered pairs \(\{a,b\}\).  Third, there are the elliptic classes, namely the regular semisimple elements whose characteristic polynomial is irreducible over \(\F_q\).  If \(\F_{q^2}\) is viewed as a two-dimensional vector space over \(\F_q\), multiplication gives an embedding \begin{equation}\label{defnofiota}\iota:\F_{q^2}^\times\hookrightarrow \op{Aut}_{\F_q}(\F_{q^2})\xrightarrow{\sim}G_q.\end{equation}  The elliptic classes are represented by \(\iota(\alpha)\), with \(\alpha\in\F_{q^2}^\times\setminus \F_q^\times\).  Finally, there are the non-semisimple Jordan classes represented by \(\mtx{a}{1}{0}{a}\), with \(a\in\F_q^\times\).
The total number of conjugacy classes equals
\[
(q-1)+\frac{(q-1)(q-2)}2+\frac{q^2-q}{2}+(q-1)=q^2-1.
\]
This is summarized in the table below.
\begin{table}[h]
\centering
\begin{tabular}{|c|c|c|c|}
\hline
Type & Representative & Description & Number of classes \\
\hline
\hline
Scalar & $\begin{psmallmatrix} a & 0 \\ 0 & a \end{psmallmatrix}$ & Central elements & $q-1$ \\
Split semisimple & $\begin{psmallmatrix} a & 0 \\ 0 & b \end{psmallmatrix}$, $a\neq b$ & Diagonalizable over $\F_q$ & $\frac{(q-1)(q-2)}{2}$ \\
Elliptic & $\iota(\alpha),\,\alpha\in \F_{q^2}^\times\backslash \F_q^\times$ & Irreducible characteristic polynomial & $\frac{q^2-q}{2}$ \\
Jordan & $\begin{psmallmatrix} a & 1 \\ 0 & a \end{psmallmatrix}$ & Non-semisimple & $q-1$\\
\hline
\end{tabular}
\caption{Number of conjugacy classes}
    \label{table: number of conjugacy classes}
\end{table}
\subsection{Irreducible representations of \(G_q\)}\label{s 2.2}
\par Given a representation 
\[\rho: G_q\rightarrow \op{GL}(V),\] the character of $\rho$ is defined as follows:
\[\chi(g):=\op{trace}\rho(g)\]for $g\in G_q$. In this subsection, we recall the standard classification of the irreducible complex representations of \(G_q\) following \cite[Ch.2]{BH06}.
\par The first family consists of one-dimensional representations. Such representations are of the form \(\chi\circ\det\), where \(\chi:\F_q^\times\to\C^\times\) is a character. There are \(q-1\) one-dimensional representations.

The second family consists of principal series representations.  Let \(\chi_1,\chi_2:\F_q^\times\to\C^\times\) be characters.  We regard \(\chi_1\otimes\chi_2\) as a character of \(B\) by sending \(\mtx{a}{b}{0}{d}\) to \(\chi_1(a)\chi_2(d)\).  If \(\chi_1\neq\chi_2\), the induced representation \(\Ind_B^{G_q}(\chi_1\otimes\chi_2)\) is irreducible.  Moreover, interchanging \(\chi_1\) and \(\chi_2\) gives an isomorphic representation, and these are the only identifications.  Therefore, the number of principal series representations is \((q-1)(q-2)/2\).

The third family consists of twists of the Steinberg representation. The representation $\Ind_B^{G_q}(1)$ decomposes as a direct sum $1\oplus \op{St}$, where $\op{St}$ is the Steinberg representation. We note that this is also the permutation representation of \(G_q\) on \(G_q/B\).  Twisting the $q$ dimensional representation $\op{St}$ by \(\chi\circ\det\), where \(\chi\) ranges over the characters of \(\F_q^\times\), gives \(q-1\) irreducible representations.
\par The fourth family consists of cuspidal representations, i.e., the irreducible representations not containing the trivial character of $N$.  We recall the
explicit construction following \cite{BH06}.  Let
\(E=\F_{q^2}\), viewed as a two-dimensional vector space over \(\F_q\).
Then \(E^\times\) embeds in \(G_q\) via the map $\iota$ (see \eqref{defnofiota}). Fix a nontrivial character
\(\psi:N\to \C^\times\).  If
\(\theta:E^\times\to \C^\times\) is a character, then its restriction to
\(\F_q^\times\subset E^\times\) gives a character of \(Z\).  Hence, we obtain
a character \(\theta\otimes\psi\) of \(ZN\) by
\[
(\theta\otimes\psi)(zu)=\theta(z)\psi(u),
\quad\text{for}\quad z\in Z \quad \text{and}\quad u\in N,
\]
where \(Z\simeq \F_q^\times\) via \(aI\mapsto a\).

A character \(\theta:E^\times\to\C^\times\) is called regular if
\(\theta\neq\theta^q\), where \(\theta^q(\alpha):=\theta(\alpha^q)\).
Consider the virtual representation
\[
\pi_\theta
:=
\Ind_{ZN}^{G_q}(\theta|_{\F_q^\times}\otimes\psi)
-
\Ind_{E^\times}^{G_q}\theta.
\]
where $\theta\vert_{\mathbb{F}_q^\times}$ is interpreted as a character of $Z$. Then \(\pi_\theta\) is in fact a cuspidal representation of
\(G_q\) of dimension \(q-1\), cf. \cite[p.~47]{BH06}.  Moreover,
\[
\pi_\theta\simeq \pi_{\theta'}
\quad\text{if and only if}\quad
\theta'=\theta \text{ or } \theta'=\theta^q.
\]
 Every cuspidal representation of \(G_q\) is obtained in this way.

The non-regular characters are precisely those fixed by Frobenius.  These
are exactly the characters which factor through the norm map
\(N_{E/\F_q}:E^\times\to\F_q^\times\), and hence there are \(q-1\) of them.
Since \(E^\times\) has \(q^2-1\) characters in total, there are
\(q^2-1-(q-1)=q(q-1)\) regular characters.  Dividing by the Frobenius
equivalence \(\theta\sim\theta^q\), we obtain \(q(q-1)/2\) cuspidal
representations.

\subsection{Divisibility of character values}

Let \(K\) be a number field containing the values of the characters under consideration, and let \(\cO_K\) be its ring of integers.  If \(\alpha\in\cO_K\), we say that \(\ell\) divides \(\alpha\) if \(\alpha\in \ell\cO_K\), equivalently if \(\alpha/\ell\in\cO_K\).  This notion is independent of enlarging \(K\).  In particular, since \(0/\ell=0\), the value \(0\) is divisible by every prime \(\ell\).

We shall repeatedly use the following elementary observations about sums of roots of unity. Let $\Phi_n(x)$ denote the $n$-th cyclotomic polynomial.
\begin{lemma}\label{basic lemma 1}
For \(n\geq 1\), one has
\[
\Phi_n(1)
=
\begin{cases}
0, & n=1,\\
p, & n=p^r \text{ for some prime }p\text{ and some }r\geq 1,\\
1, & n>1 \text{ and } n \text{ is not a prime power}.
\end{cases}
\]
\end{lemma}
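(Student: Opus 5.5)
The plan is to evaluate \(\Phi_n(1)\) by specializing the factorization \(x^n-1=\prod_{d\mid n}\Phi_d(x)\) at \(x=1\), after first dividing out the factor \(\Phi_1(x)=x-1\). The case \(n=1\) is immediate, since \(\Phi_1(x)=x-1\) gives \(\Phi_1(1)=0\).

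For \(n>1\), divide the factorization by \(x-1\). This gives
\[
1+x+\cdots+x^{n-1}=\prod_{\substack{d\mid n\\ d>1}}\Phi_d(x),
\]
and evaluating at \(x=1\) yields the key identity
\[
n=\prod_{\substack{d\mid n\\ d>1}}\Phi_d(1)\qquad (n>1).
\]

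Next I treat prime powers directly. For \(n=p^r\) one has the explicit form
\[
\Phi_{p^r}(x)=\frac{x^{p^r}-1}{x^{p^{r-1}}-1}=1+x^{p^{r-1}}+\cdots+x^{(p-1)p^{r-1}},
\]
which is a sum of \(p\) monomials. Hence \(\Phi_{p^r}(1)=p\).

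For general \(n>1\), I write \(n=\prod_i p_i^{a_i}\). The divisors \(d>1\) of \(n\) that are prime powers are exactly the \(p_i^{r}\) with \(1\le r\le a_i\). By the previous step their product is
\[
\prod_i\prod_{r=1}^{a_i}p_i=\prod_i p_i^{a_i}=n.
\]
Inserting this into the key identity, the remaining factors must satisfy
\[
\prod_{\substack{d\mid n,\ d>1\\ d\text{ not a prime power}}}\Phi_d(1)=1 .
\]

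The final step, and the only subtle one, is to show that each individual factor \(\Phi_d(1)\) with \(d\) not a prime power equals \(1\), rather than only their product. I would argue by strong induction on \(n\), with the inductive hypothesis that \(\Phi_d(1)=1\) for every proper divisor \(d\) of \(n\) that is greater than \(1\) and not a prime power. If \(n\) itself is not a prime power, then in the displayed product every factor other than \(\Phi_n(1)\) equals \(1\) by induction, so \(\Phi_n(1)=1\). Note that \(\Phi_n(1)\) is an integer, since \(\Phi_n\) has integer coefficients, but the induction does not even need this. This completes the plan.
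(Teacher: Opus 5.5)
Your proof is correct and is essentially the paper's argument: evaluate \(1+x+\cdots+x^{n-1}=\prod_{d\mid n,\ d>1}\Phi_d(x)\) at \(x=1\), then use strong induction to isolate \(\Phi_n(1)=1\) when \(n\) is not a prime power. The only difference is the prime-power case: you read off \(\Phi_{p^r}(1)=p\) from the closed form \(\Phi_{p^r}(x)=1+x^{p^{r-1}}+\cdots+x^{(p-1)p^{r-1}}\), whereas the paper derives it inductively from \(p^r=\prod_{j=1}^{r}\Phi_{p^j}(1)\).
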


\begin{proof}
For \(n=1\), we have \(\Phi_1(x)=x-1\), and hence
\[
\Phi_1(1)=0.
\]
\noindent For $n>1$, we have that
\[
\frac{x^n-1}{x-1}
=
\prod_{\substack{d\mid n\\ d>1}}\Phi_d(x).
\]
Evaluating at \(x=1\), we get
\[
n=\prod_{\substack{d\mid n\\ d>1}}\Phi_d(1).
\]

We prove the desired formula by induction on \(n\). We have $\Phi_2(x)=x+1$ so the case \(n=2\) is immediate. Assume that the formula has been proved for all integers \(m\) with
\(1<m<n\). We prove it for \(n\). First suppose that \(n=p^r\) is a prime power. We find that
\[
p^r=\prod_{j=1}^{r}\Phi_{p^j}(1).
\]
By the induction hypothesis, for \(1\leq j<r\), we have
\[
\Phi_{p^j}(1)=p.
\]
Therefore,
\[
p^r
=
\left(\prod_{j=1}^{r-1}\Phi_{p^j}(1)\right)\Phi_{p^r}(1)
=
p^{r-1}\Phi_{p^r}(1).
\]
Hence,
\[
\Phi_{p^r}(1)=p.
\]

Now suppose that \(n>1\) is not a prime power. Write its prime factorization as
\[
n=\prod_{i=1}^{s}p_i^{a_i},
\]
where \(s\geq 2\). 
%Applying \((1)\), we have\[n=\prod_{\substack{d\mid n\\ d>1}}\Phi_d(1).\]
By the induction hypothesis, for every proper divisor \(d<n\),
\[
\Phi_d(1)
=
\begin{cases}
p_i, & d=p_i^j,\\
1, & d>1 \text{ and } d \text{ is not a prime power}.
\end{cases}
\]
Since \(n\) itself is not a prime power, every prime-power divisor of \(n\) is
proper. Therefore
\[
\prod_{\substack{d\mid n\\ 1<d<n}}\Phi_d(1)
=
\prod_{i=1}^{s}\prod_{j=1}^{a_i} p_i
=
\prod_{i=1}^{s}p_i^{a_i}
=
n.
\]
Thus,
\[
n
=
\prod_{\substack{d\mid n\\ d>1}}\Phi_d(1)
=
\left(\prod_{\substack{d\mid n\\  1<d<n}}\Phi_d(1)\right)
\Phi_n(1)
=
n\Phi_n(1).
\]
Hence $\Phi_n(1)=1$ when $n$ is not a prime power, as required.
\end{proof}

\begin{lemma}\label{lem:root-unity}
Let \(\zeta\) be a root of unity and let \(\ell\) be a rational prime.
Then \((1+\zeta)/\ell\) is an algebraic integer if and only if either
\(\zeta=-1\), or \(\ell=2\) and \(\zeta=1\).
\end{lemma}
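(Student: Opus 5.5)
The plan is to reduce to a norm computation. The "if" direction is immediate. If \(\zeta=-1\) then \(1+\zeta=0\), and \(0/\ell=0\) is an algebraic integer. If \(\ell=2\) and \(\zeta=1\), then \((1+\zeta)/\ell=1\).

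For the converse, suppose \(\zeta\neq -1\) and \((1+\zeta)/\ell\) is an algebraic integer. Then \(1+\zeta\neq 0\). Let \(\zeta\) have exact order \(m\), and set \(K=\Q(\zeta)\), of degree \(d=\varphi(m)\). If \(\ell\) divides \(1+\zeta\) in \(\cO_K\), then taking norms gives
\[
\ell^{d}\mid N_{K/\Q}(1+\zeta)\quad\text{in }\Z .
\]

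So the main step is to compute this norm. Since \(-\zeta\) is a root of unity, say of exact order \(n\), we have \(\Q(-\zeta)=K\), and the conjugates of \(1+\zeta\) are \(1-\omega\), where \(\omega\) runs over the primitive \(n\)-th roots of unity. Hence
\[
N_{K/\Q}(1+\zeta)=\prod_{\omega}(1-\omega)=\Phi_n(1),
\]
where \(n\neq 1\) because \(\zeta\neq -1\). By Lemma~\ref{basic lemma 1}, this norm is either \(1\) or a prime \(p\), the latter exactly when \(n=p^r\).

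Now \(\ell^{d}\) must divide a number equal to \(1\) or to a prime. This forces \(\Phi_n(1)=p=\ell\) and \(d=\varphi(n)=1\), so \(n\in\{1,2\}\). Since \(n\neq 1\), we get \(n=2\), hence \(-\zeta=-1\), i.e.\ \(\zeta=1\), and then \(\Phi_2(1)=2=\ell\). This is the claimed exceptional case.

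The only point requiring care is the norm identity: one needs that \(1+\zeta\) and \(1-(-\zeta)\) have the same Galois orbit over \(\Q\), because \(\Q(\zeta)=\Q(-\zeta)\). One also needs to track degrees correctly, \(d=\varphi(m)=\varphi(n)\). Everything else follows directly from Lemma~\ref{basic lemma 1}.
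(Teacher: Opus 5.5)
Your proof is correct and follows the same route as the paper: you take the norm from $\Q(\zeta)$ to get $\ell^{\varphi(m)}$ dividing a value of a cyclotomic polynomial, then use Lemma~\ref{basic lemma 1} to see that this value is $1$ or a single prime. The one difference is a welcome simplification: writing $1+\zeta=1-(-\zeta)$ identifies the norm directly as $\Phi_n(1)$, with $n$ the order of $-\zeta$. The paper instead tabulates $|\Phi_m(-1)|$ via the identity $\Phi_{2n}(X)=\Phi_n(-X)$, which on its own does not cover the cases with $4\mid m$.
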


\begin{proof}
If \(\zeta=-1\), then \(1+\zeta=0\), and hence
\((1+\zeta)/\ell=0\) is an algebraic integer.  If \(\zeta=1\) and
\(\ell=2\), then \((1+\zeta)/\ell=1\), and this is again an algebraic
integer.

Conversely, suppose that \((1+\zeta)/\ell\) is an algebraic integer.  Let
\(m\) be the order of \(\zeta\).  We work in the cyclotomic field
\(\Q(\zeta_m)\), where \(\zeta_m\) is a primitive \(m\)-th root of unity,
and let \(\Phi_m\) be the \(m\)-th cyclotomic polynomial.  Replacing
\(\zeta\) by \(\zeta_m\), the assumption implies that \(1+\zeta_m\) is
divisible by \(\ell\) as an algebraic integer.  Therefore $\ell^{\varphi(m)}$ divides $\left|N_{\Q(\zeta_m)/\Q}(1+\zeta_m)\right|$. We observe that
\[
\left|N_{\Q(\zeta_m)/\Q}(1+\zeta_m)\right|
=
|\Phi_m(-1)|.
\]
According to Lemma \ref{basic lemma 1}, we have that
\[
\Phi_n(1)
=
\begin{cases}
0, & n=1,\\
p, & n=p^r \text{ for some prime }p\text{ and some }r\geq 1,\\
1, & n>1 \text{ and } n \text{ is not a prime power}.
\end{cases}
\]
When \(m=2n\) with \(n\) odd, the identity
\[\Phi_{2n}(X)=\Phi_n(-X)
\]
implies that
\[
|\Phi_m(-1)|=|\Phi_n(1)|.
\]
From this one finds that
\[
|\Phi_m(-1)|
=
\begin{cases}
0, & m=2,\\
2, & m=2^r \text{ for some } r\geq 2,\\
p, & m=2p^r \text{ for some odd prime }p\text{ and some }r\geq 1,\\
1, & \text{otherwise},
\end{cases}
\]
for \(m>1\). Assuming that $m>2$, this
contradicts the divisibility
\[
\ell^{\varphi(m)}
\mid
\left|N_{\Q(\zeta_m)/\Q}(1+\zeta_m)\right|.
\]
Therefore the only possible cases are \(m=1\), with \(\ell=2\), and
\(m=2\).  Equivalently, either \(\zeta=1\) and \(\ell=2\), or
\(\zeta=-1\).
\end{proof}

\section{The Character values}

The first step is to compute $\chi(C)$ in all $16$ cases, where $C$ ranges over the four types of conjugacy classes in $G_q$ from section \ref{s 2.1} and $\chi$ ranges over the four types of irreducible representations of $G_q$ from section \ref{s 2.2}. After computing them, we will be able to prove asymptotic formulas for $N_\ell(q)$. 

\par If $H \leq G$ is a subgroup and $\sigma$ is a class function on $H$, then the character of the induced representation $\Ind_H^G \sigma$ is given by
\begin{equation}\label{char-formula}\chi_{\Ind_H^G \sigma}(g)
= \frac{1}{|H|} \sum_{\substack{x \in G \\ x^{-1} g x \in H}} \sigma(x^{-1} g x);
\end{equation}
see \cite[Theorem 12, p.~30]{Ser77}.
We begin by considering the characters of the one-dimensional representations.

\begin{lemma}[One-dimensional representations]
Let $\chi = \varepsilon \circ \det$ be a one-dimensional representation of $G_q$.
Then we have that
\begin{align*}
\chi(aI) &= \varepsilon(a^2), \\
\chi\left(\mtx{a}{0}{0}{b}\right) &= \varepsilon(ab), \quad a \neq b, \\
\chi\left(\mtx{a}{1}{0}{a}\right)&= \varepsilon(a^2), \\
\chi\left(\iota(\alpha)\right) &=\varepsilon(\alpha^{q+1})\quad \text{for}\quad  \alpha\in \F_{q^2}\setminus \F_q,
\end{align*}
where $\iota: \F_{q^2}^\times \hookrightarrow G_q$ is the natural inclusion from \eqref{defnofiota}.
\end{lemma}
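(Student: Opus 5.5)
Since $\chi=\varepsilon\circ\det$ is a class function whose value at $g$ is $\varepsilon(\det g)$, the whole statement reduces to computing the determinant of each of the four representatives from Section \ref{s 2.1}. No induced-character formula or orthogonality is needed. The plan is to do the three matrix cases by inspection and then treat the elliptic case, which requires identifying $\det\iota(\alpha)$.

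For the scalar class, $\det(aI)=a^2$, so $\chi(aI)=\varepsilon(a^2)$. For the split regular semisimple class, $\det\diag(a,b)=ab$, giving $\varepsilon(ab)$. For the Jordan class, the matrix $\mtx{a}{1}{0}{a}$ is upper triangular with diagonal entries $a,a$, so its determinant is $a^2$ and the value is $\varepsilon(a^2)$. Because $\chi$ is constant on conjugacy classes, these values hold for every element of the respective classes.

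For the elliptic case, I would use the standard fact that the determinant of the $\F_q$-linear map $m_\alpha:x\mapsto\alpha x$ on $E=\F_{q^2}$ equals the norm $N_{E/\F_q}(\alpha)$. To see this, extend scalars to $\overline{\F}_q$, or simply note that for $\alpha\notin\F_q$ the minimal polynomial of $\alpha$ over $\F_q$ has degree $2$. That polynomial is therefore the characteristic polynomial of $m_\alpha$, and its roots are $\alpha$ and $\alpha^q$, the Galois conjugates under Frobenius. Hence
\[
\det\iota(\alpha)=\alpha\cdot\alpha^q=\alpha^{q+1}\in\F_q^\times ,
\]
and so $\chi(\iota(\alpha))=\varepsilon(\alpha^{q+1})$. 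The only point requiring care is that the characteristic polynomial is computed with respect to an $\F_q$-basis, for instance $\{1,\alpha\}$. In that basis $m_\alpha$ is the companion matrix of $x^2-(\alpha+\alpha^q)x+\alpha^{q+1}$, and its determinant is the constant term $\alpha^{q+1}$, which makes the computation completely explicit. This step is the only one that is not immediate, and even it is routine.
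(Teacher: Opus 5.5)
Your proposal is correct and follows the paper's proof: the first three values are read off from determinants, and the elliptic case rests on $\det\iota(\alpha)=N_{\F_{q^2}/\F_q}(\alpha)=\alpha^{q+1}$. The companion-matrix computation in the basis $\{1,\alpha\}$ spells out the norm identity, which the paper states without proof.
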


\begin{proof}
The first three assertions are immediate. We note that for $\alpha\in \F_{q^2}\setminus \F_q$, \[\det \iota(\alpha) = \op{Norm}_{\F_{q^2}/\F_q}(\alpha) = \alpha^{q+1}.\]
Thus the last assertion follows.
\end{proof}
Next we compute the character values of principal series representations.
\begin{lemma}[Principal series representations]\label{princseriescalc}
Let $\pi = \Ind_B^G(\chi_1 \otimes \chi_2)$ with $\chi_1 \neq \chi_2$, and let $\chi$ be its character. Then:
\[
\chi(g) =
\begin{cases}
(q+1)\chi_1(a)\chi_2(a), & g = aI,\\[6pt]
\chi_1(a)\chi_2(b) + \chi_1(b)\chi_2(a), & g \sim \diag(a,b),\ a \neq b,\\[6pt]
\chi_1(a)\chi_2(a), & g \sim \begin{psmallmatrix} a & 1 \\ 0 & a \end{psmallmatrix},\\[6pt]
0, & g \sim \iota(\alpha),\quad \alpha\in \F_{q^2}\setminus \F_q.
\end{cases}
\]
\end{lemma}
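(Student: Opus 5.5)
My plan is to apply the induction formula \eqref{char-formula} with $H=B$ and $\sigma=\chi_1\otimes\chi_2$, in its coset form. Since $\sigma$ is a class function on $B$ (indeed a character), we can group $x$ by cosets $xB$, which gives
\[
\chi(g)=\sum_{xB\in G_q/B,\; x^{-1}gx\in B}\sigma(x^{-1}gx).
\]
The coset space $G_q/B$ is identified with $\mathbb P^1(\F_q)$, the set of lines in $\F_q^2$, via $xB\mapsto x\langle e_1\rangle$. The condition $x^{-1}gx\in B$ says exactly that the line $x\langle e_1\rangle$ is $g$-stable. When it holds, $\sigma(x^{-1}gx)=\chi_1(\lambda)\chi_2(\mu)$, where $\lambda$ is the eigenvalue of $g$ on that line and $\mu$ is the eigenvalue of $g$ on the quotient $\F_q^2/(\text{line})$. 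These are just the diagonal entries of $x^{-1}gx$.

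So the whole computation reduces to counting the $g$-stable lines for each type of conjugacy class and recording the pair (eigenvalue on the line, eigenvalue on the quotient). I will go through the four types in turn.

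\begin{itemize}
\item For $g=aI$, every one of the $q+1$ lines is stable, and on each the pair is $(a,a)$. This gives $(q+1)\chi_1(a)\chi_2(a)$.
\item For $g=\diag(a,b)$ with $a\neq b$, the only stable lines are the two eigenlines $\langle e_1\rangle$ and $\langle e_2\rangle$. They carry the pairs $(a,b)$ and $(b,a)$ respectively, giving $\chi_1(a)\chi_2(b)+\chi_1(b)\chi_2(a)$.
\item For the Jordan block $\mtx{a}{1}{0}{a}$, the unique stable line is $\langle e_1\rangle$, with pair $(a,a)$. This gives $\chi_1(a)\chi_2(a)$.
\item For $\iota(\alpha)$ with $\alpha\notin\F_q$, a stable line would supply an eigenvector over $\F_q$. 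That would give an $\F_q$-rational eigenvalue, which is impossible because the characteristic polynomial is irreducible over $\F_q$ (its roots are $\alpha$ and $\alpha^q$). So there are no stable lines and the value is $0$.
\end{itemize}

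Since characters are class functions, it suffices to compute at these representatives; the formula then holds for every $g$ conjugate to one of them.

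There is no serious obstacle here. The only step that needs care is justifying the passage from the sum over $x\in G_q$ to the sum over cosets, and the identification of $\sigma(x^{-1}gx)$ with the eigenvalue pair. If $x$ is replaced by $xb$ with $b\in B$, then $(xb)^{-1}g(xb)=b^{-1}(x^{-1}gx)b$. Conjugating an upper-triangular matrix by an element of $B$ does not change its diagonal, so $\sigma(x^{-1}gx)$ depends only on the coset $xB$. Each coset contributes $|B|$ equal terms, which cancels the factor $1/|B|$ in \eqref{char-formula}.

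For the diagonal entries, I will use that the first column of $x$ spans the line $x\langle e_1\rangle$. The top-left entry of $x^{-1}gx$ is then the eigenvalue of $g$ on that line. Since the determinant of $x^{-1}gx$ equals $\det g$, the bottom-right entry is the remaining eigenvalue, namely $\det g$ divided by the first one, which is the eigenvalue on the quotient.
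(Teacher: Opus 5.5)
Your proof is correct and follows essentially the same route as the paper. Both apply the induced character formula with $H=B$ and determine, class by class, which $x$ conjugate $g$ into $B$; this amounts to finding the $g$-stable lines, an argument the paper makes explicit only in the Jordan case. Your coset formulation over $\mathbb P^1(\F_q)$ is a slightly cleaner packaging, and in the split case it correctly gives the relevant $x$ as $B\sqcup wB$, whereas the paper writes $B\sqcup Bw$ (a harmless slip that does not affect the final value).
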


\begin{proof}
We use the induced character formula \eqref{char-formula} to find that
\[
\chi(g) = \frac{1}{|B|} \sum_{\substack{x \in G_q \\ x^{-1} g x \in B}} (\chi_1 \otimes \chi_2)(x^{-1} g x).
\]
\par If $g = aI$, then every conjugate lies in $B$, and the value is simply
\[
\chi(g) = \frac{|G_q|}{|B|} \chi_1(a)\chi_2(a) = (q+1)\chi_1(a)\chi_2(a),
\]
since $|G_q/B| =|\mathbb P^1(\F_q)|= q+1$.
\par If $g$ is split semisimple, say $g \sim \diag(a,b)$ with $a \neq b$, then $g$ is conjugate into $B$ by $x\in B\sqcup Bw$ where $w=\mtx{0}{1}{1}{0}$. The contribution of all $x\in B$ (resp. $x\in Bw$) is $\chi_1(a)\chi_2(b)$ (resp. $\chi_1(b)\chi_2(a)$). This gives the stated formula.
%$x\in B\sqcup Bw$ by direct computation
\par If $g$ is a Jordan block, then it lies in $B$ and has a unique eigenvalue $a$. Any eigenvector of $g$ must lie in the line spanned by $e_1$. Suppose that $x^{-1} g x\in B$. Then since $x^{-1} g x$ is not scalar and also has the same eigenvalues as $g$, we may write 
\[x^{-1} g x=\mtx{a}{\ast}{0}{a},\] where $\ast\neq 0$. Thus any eigenvector of $x^{-1}g x$ must also belong to the line spanned by $e_1$. It follows that $x$ stabilizes the line spanned by $e_1$, i.e., $x\in B$. We deduce that \[\chi(g)=\chi_1(a)\chi_2(a).\]
\par Lastly, if $g\sim \iota(\alpha)$ for $\alpha\in \F_{q^2}\setminus \F_q$, then it is not conjugate into $B$, since $B$ consists of matrices whose characteristic polynomial splits over $\F_q$. Hence $\chi(g)=0$.
\end{proof}

\begin{lemma}[Steinberg representations]
Let $\pi = \St$, the Steinberg representation. The character $\chi$ of $\pi$ is given as follows.
\[
\chi(g) =
\begin{cases}
q, & g = aI,\\[6pt]
1, & g \sim \diag(a,b),\ a \neq b,\\[6pt]
0, & g \sim \begin{psmallmatrix} a & 1 \\ 0 & a \end{psmallmatrix},\\[6pt]
-1, & g \sim \iota(\alpha),\quad \alpha\in \F_{q^2}\setminus \F_q.
\end{cases}
\]
\end{lemma}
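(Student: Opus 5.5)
The approach is to use the decomposition $\Ind_B^{G_q}(1)=1\oplus\St$ recorded in Section~\ref{s 2.2}. Taking characters gives $\chi_{\St}(g)=\chi_{\Ind_B^{G_q}(1)}(g)-1$ for every $g\in G_q$. It therefore suffices to compute the permutation character of $G_q$ acting on $G_q/B\simeq\mathbb P^1(\F_q)$, that is, the number of $\F_q$-rational lines in $\F_q^2$ fixed by $g$.

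The character of $\Ind_B^{G_q}(1)$ can be obtained in two equivalent ways. One is to specialize Lemma~\ref{princseriescalc} to $\chi_1=\chi_2=1$. That lemma assumes $\chi_1\ne\chi_2$ only for irreducibility; its proof via \eqref{char-formula} never uses this hypothesis. The other is to count fixed points directly, one class type at a time:
\begin{itemize}
\item a scalar $aI$ fixes all $q+1$ lines;
\item $\diag(a,b)$ with $a\neq b$ fixes exactly its two eigenlines;
\item the Jordan element $\mtx{a}{1}{0}{a}$ has the single eigenline spanned by $e_1$, so it fixes one line;
\item $\iota(\alpha)$ with $\alpha\notin\F_q$ has irreducible characteristic polynomial, hence no $\F_q$-rational eigenvector, so it fixes no line.
\end{itemize}
This agrees with Lemma~\ref{princseriescalc}: the values $(q+1)$, $1+1$, $1$ and $0$ respectively.

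Subtracting $1$ from each value gives $q$, $1$, $0$ and $-1$, which are the stated values. No step is a genuine obstacle. The only point needing care is justifying the use of Lemma~\ref{princseriescalc} at $\chi_1=\chi_2$, and the direct fixed-point count avoids this entirely, so I would present that count as the primary argument.
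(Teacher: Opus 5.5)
Your proposal is correct and matches the paper's proof. The paper also writes $\chi_{\St}=\chi_{\Ind_B^{G_q}(1)}-1$ and reads off the permutation character from Lemma~\ref{princseriescalc} with $\chi_1=\chi_2=1$, observing that the hypothesis $\chi_1\neq\chi_2$ is never used there. Your direct count of fixed lines on $\mathbb P^1(\F_q)$ is the same computation in different language, since for the trivial character the induced character formula simply counts the cosets $xB$ fixed by $g$.
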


\begin{proof}
Recall from Section \ref{s 2.2} that $\Ind_{B}^{G_q}(1)=\St\oplus 1.$
Since trace is additive, we have the identity
\[\chi=\chi_{\Ind_B^{G_q}(1)}-1.\]
The computation of $\chi_{\Ind_B^{G_q}(1)}$ is the same as in Lemma \ref{princseriescalc} (where the assumption $\chi_1\neq \chi_2$ has not been used in the proof). The result follows from this.
\end{proof}
Next we consider characters of cuspidal representations.

\begin{lemma}[Cuspidal representations]\label{cuspcalclemma}
Let
\(\theta:E^\times\to \C^\times\) be a regular character and let \(\pi_\theta\) be the corresponding cuspidal representation as defined in section \ref{s 2.2}. Let \(\chi\) denote its character. Then
\[
\chi(g)
=
\begin{cases}
(q-1)\theta(a), &
g=aI,\\[6pt]
0, &
g\sim \diag(a,b),\ a\neq b,\\[6pt]
-\theta(a), &
g\sim
\begin{psmallmatrix}
a & 1\\
0 & a
\end{psmallmatrix},\\[12pt]
-\theta(\alpha)-\theta(\alpha^q), &
g\sim \iota(\alpha),\quad
\alpha\in \F_{q^2}\setminus \F_q.
\end{cases}
\]
\end{lemma}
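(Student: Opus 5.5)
The plan is to compute the character of the virtual representation \(\pi_\theta=\Ind_{ZN}^{G_q}(\theta|_{\F_q^\times}\otimes\psi)-\Ind_{E^\times}^{G_q}\theta\) term by term with the induced character formula \eqref{char-formula}, evaluating each induced character on the four class types separately and then subtracting. Write \(\chi_1\) for the character of \(\Ind_{ZN}^{G_q}(\theta\otimes\psi)\) and \(\chi_2\) for that of \(\Ind_{E^\times}^{G_q}\theta\), so that \(\chi=\chi_1-\chi_2\). Throughout I use \(|G_q|=q(q-1)^2(q+1)\), \(|ZN|=q(q-1)\) and \(|E^\times|=q^2-1\).

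For \(\chi_2\), note first that every element of \(\iota(E^\times)\) is either scalar or elliptic. Hence \(\chi_2\) vanishes on split regular semisimple and Jordan classes. On \(aI\) every conjugate lies in \(E^\times\), so
\[
\chi_2(aI)=\frac{|G_q|}{q^2-1}\,\theta(a)=q(q-1)\theta(a).
\]
On an elliptic \(g=\iota(\alpha)\), the elements \(x\) with \(x^{-1}gx\in\iota(E^\times)\) are exactly those conjugating \(\iota(\alpha)\) to \(\iota(\alpha)\) or to \(\iota(\alpha^q)\). These are the only conjugates of \(\iota(\alpha)\) in \(\iota(E^\times)\), since they share the characteristic polynomial. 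Each of the two sets is a coset of the centralizer \(\iota(E^\times)\), and the set for \(\iota(\alpha^q)\) is nonempty because Frobenius acts \(\F_q\)-linearly on \(E\). Therefore
\[
\chi_2(\iota(\alpha))=\theta(\alpha)+\theta(\alpha^q).
\]

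For \(\chi_1\), elements of \(ZN\) are either scalar or Jordan, so \(\chi_1\) vanishes on split regular semisimple and elliptic classes. On scalars,
\[
\chi_1(aI)=\frac{|G_q|}{q(q-1)}\,\theta(a)=(q^2-1)\theta(a).
\]
The Jordan case is the step needing real care. For \(g=\mtx{a}{1}{0}{a}\), I need to count the \(x\) with \(x^{-1}gx=\mtx{a}{t}{0}{a}\) for each \(t\neq0\), and then sum \(\psi\) over these values. The argument in Lemma \ref{princseriescalc} shows any such \(x\) lies in \(B\). Conjugating by \(x=\mtx{u}{v}{0}{w}\) sends \(1\mapsto w/u\), so each \(t\in\F_q^\times\) is attained by exactly \(|B|/(q-1)=q(q-1)\) elements \(x\). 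This gives
\[
\chi_1(g)=\frac{q(q-1)}{q(q-1)}\,\theta(a)\sum_{t\in\F_q^\times}\psi(t)=-\theta(a),
\]
using that \(\psi\) is nontrivial. Here \(\psi\) is regarded as a character of \(\F_q\) via \(\mtx{1}{x}{0}{1}\mapsto x\).

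Subtracting, \(\chi=\chi_1-\chi_2\) gives
\[
\chi(aI)=(q^2-1)\theta(a)-q(q-1)\theta(a)=(q-1)\theta(a),
\]
zero on split regular semisimple classes, \(-\theta(a)\) on Jordan classes, and \(-\theta(\alpha)-\theta(\alpha^q)\) on elliptic classes, which is the stated formula. The main obstacle I expect is the bookkeeping of the normalizations in \eqref{char-formula}, especially the fibre count in the Jordan case and the identification of the two cosets in the elliptic case. I would double-check these counts with the sanity check \(\chi(1)=q-1\), which follows from the scalar case at \(a=1\).
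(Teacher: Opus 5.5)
Your proposal is correct and follows essentially the same route as the paper: you split $\chi$ into the $ZN$-induced and $E^\times$-induced characters, use that $ZN$ contains only scalar and Jordan elements while $\iota(E^\times)$ contains only scalar and elliptic ones, reduce the Jordan case to $x\in B$ with $x^{-1}gx$ having upper-right entry $w/u$, and identify the two cosets of the centralizer $\iota(E^\times)$ in the elliptic case. One cosmetic point: with the representative $\mtx{a}{1}{0}{a}$, the conjugate $\mtx{a}{t}{0}{a}=aI\,u_{t/a}$ contributes $\theta(a)\psi(t/a)$ rather than $\theta(a)\psi(t)$, but this does not matter because $t\mapsto t/a$ permutes $\F_q^\times$ (the paper avoids the reindexing by using the representative $aI\,u_1$ instead).
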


\begin{proof}
Recall from section \ref{s 2.2} that we fix a nontrivial
character
\[
\psi:N\longrightarrow \C^\times.
\]
After identifying \(N\) with the additive group of \(\F_q\)
by
\[
x\longmapsto
u_x:=
\begin{pmatrix}
1 & x\\
0 & 1
\end{pmatrix},
\]
we interpret $\psi$ as an additive character $\psi:\F_q\longrightarrow \C^\times$. We identify \(Z\) with \(\F_q^\times\) by \(a\mapsto aI\).  Since
\(\F_q^\times\subset E^\times\), the restriction of \(\theta\) to
\(\F_q^\times\) gives a character of \(Z\).  We define a character $\eta:ZN\longrightarrow \C^\times$ by
\begin{equation}\label{etadefn}\eta(aI\,u_x):=\theta(a)\psi(x),\end{equation}
for $a\in\F_q^\times$ and $x\in\F_q$.
The standard construction gives the cuspidal representation attached to
\(\theta\) as the virtual representation
\[
\pi
=
\Ind_{ZN}^{G}\eta
-
\Ind_{E^\times}^{G}\theta.
\]
Thus we have
\[
\chi(g)
=
\chi_{\Ind_{ZN}^{G}\eta}(g)
-
\chi_{\Ind_{E^\times}^{G}\theta}(g).
\]

We now compute this difference for each of the four types of conjugacy
classes in \(G_q\). First suppose that \(g=aI\) is scalar, with \(a\in\F_q^\times\).  Since
\(aI\) is central, we have \(x^{-1}gx=aI\) for every \(x\in G_q\).  Hence
\[
\chi_{\Ind_{ZN}^{G_q}\eta}(aI)
=
\frac{1}{|ZN|}
\sum_{x\in G_q}\eta(aI)
=
\frac{|G_q|}{|ZN|}\theta(a).
\]
Now
\[
\frac{|G_q|}{|ZN|}
=
q^2-1,
\]
and so
\[
\chi_{\Ind_{ZN}^{G}\eta}(aI)
=
(q^2-1)\theta(a).
\]
Similarly,
\[
\chi_{\Ind_{E^\times}^{G_q}\theta}(aI)
=
\frac{1}{|E^\times|}
\sum_{x\in G_q}\theta(a)
=
\frac{|G_q|}{|E^\times|}\theta(a).
\]
Since \(|E^\times|=q^2-1\), we get
\[
\frac{|G_q|}{|E^\times|}
=
q(q-1).
\]
Therefore
\[
\chi_{\Ind_{E^\times}^{G}\theta}(aI)
=
q(q-1)\theta(a).
\]
Subtracting, we obtain
\[
\chi(aI)
=
(q^2-1)\theta(a)-q(q-1)\theta(a)
=
(q-1)\theta(a).
\]

Next suppose that
\[
g\sim \diag(a,b),\quad \text{where}\quad
a,b\in\F_q^\times,\quad \text{with} \quad a\neq b.
\]
We show that both induced characters vanish. Every
element of \(ZN\) has the form
\[
aI\,u_x
=
\begin{pmatrix}
a & ax\\
0 & a
\end{pmatrix}.
\]
Such an element has only one eigenvalue, namely \(a\).  In particular,
no element of \(ZN\) is conjugate to a split regular semisimple element
with two distinct eigenvalues \(a\) and \(b\).  Hence there is no
\(x\in G_q\) such that \(x^{-1}gx\in ZN\), and therefore \[\chi_{\Ind_{ZN}^{G}\eta}(g)=0.\]On the other hand, every element of \(E^\times\) is either scalar
or has irreducible characteristic polynomial over \(\F_q\).  Indeed, if
\(\alpha\in E^\times\setminus \F_q^\times\), then multiplication by
\(\alpha\) has characteristic polynomial equal to the minimal polynomial
of \(\alpha\) over \(\F_q\), which is irreducible of degree \(2\).  Thus
no element of \(E^\times\) is conjugate to \(\diag(a,b)\) with
\(a\neq b\).  Hence
\[
\chi_{\Ind_{E^\times}^{G}\theta}(g)=0
\]
and consequently
\[
\chi(g)=0.
\]

Now suppose that \(g\) is a non-semisimple Jordan block, with eigenvalue
\(a\in\F_q^\times\). We write
\[
g=
\begin{pmatrix}
a & 1\\
0 & a
\end{pmatrix}
=
aI\,u_{a^{-1}}.
\]
It is more convenient to compute with $g=aI\,u_1$. This gives the same conjugacy class, since all nontrivial unipotent
Jordan blocks with eigenvalue \(a\) are conjugate.

First consider the contribution from \(E^\times\). Every element of
\(E^\times\) is semisimple as an element of \(G_q\), because \(E^\times\)
is contained in a torus.  But \(g=aIu_1\) is not semisimple.  Hence \(g\)
is not conjugate to any element of \(E^\times\), and therefore
\[
\chi_{\Ind_{E^\times}^{G}\theta}(g)=0.
\]

It remains to compute $\chi_{\Ind_{ZN}^{G}\eta}(g)$. By the induced character formula,
\[
\chi_{\Ind_{ZN}^{G}\eta}(g)
=
\frac{1}{|ZN|}
\sum_{\substack{x\in G_q\\ x^{-1}gx\in ZN}}
\eta(x^{-1}gx).
\]
Suppose that $x\in G_q$ satisfies $x^{-1}gx\in ZN$. Then from \eqref{etadefn}, we have that
\[\eta(x^{-1}g x)=\eta(aIx^{-1}u_1x)=\theta(a)\eta(x^{-1}u_1x).\]
Since $x^{-1} u_1 x\in Z N$ and both its eigenvalues are equal to $1$, we have that $x^{-1} u_1 x\in N$. Therefore, 
\[\theta(a)\eta(x^{-1}u_1x)=\theta(a)\psi(x^{-1}u_1x).\]
Thus we find that 
\[
\chi_{\Ind_{ZN}^{G_q}\eta}(g)
=\frac{\theta(a)}{|ZN|}
\sum_{\substack{x\in G\\ x^{-1}u_1x\in N}}
\psi(x^{-1}u_1x).
\]
It is easy to see that $x^{-1}u_1x\in N$ if and only if $x\in B$, the Borel subgroup of $G_q$. Writing
$x=
\begin{pmatrix}
r & s\\
0 & t
\end{pmatrix}$, a direct computation gives
\[
x^{-1}u_1x
=
u_{t/r}
\]
and therefore,
\[
\chi_{\Ind_{ZN}^{G}\eta}(g)
=
\frac{\theta(a)}{|ZN|}
\sum_{\substack{r,t\in\F_q^\times\\ s\in\F_q}}
\psi(t/r).
\]
The summand does not depend on \(s\), so the sum over \(s\) contributes a
factor of \(q\). We obtain
\[
\chi_{\Ind_{ZN}^{G}\eta}(g)
=
\frac{\theta(a)}{(q-1)}
\sum_{r,t\in\F_q^\times}\psi(t/r).
\] For fixed \(r\), the quotient \(t/r\) runs once
through \(\F_q^\times\) as \(t\) runs through \(\F_q^\times\).  Therefore
\[
\sum_{t\in\F_q^\times}\psi(t/r)
=
\sum_{u\in\F_q^\times}\psi(u).
\]
Since \(\psi\) is a nontrivial additive character of \(\F_q\), we have
\[
\sum_{u\in\F_q}\psi(u)=0,
\]
and hence
\[
\sum_{u\in\F_q^\times}\psi(u)
=
-1.
\]
It follows that
\[
\sum_{r,t\in\F_q^\times}\psi(t/r)
=
\sum_{r\in\F_q^\times}(-1)
=
-(q-1).
\]
Substituting this into the induced character formula gives
\[
\chi_{\Ind_{ZN}^{G}\eta}(g)
=
\frac{\theta(a)}{(q-1)}\cdot (-(q-1))
=
-\theta(a).
\]
Since the \(E^\times\)-induced character contributes \(0\), we obtain
\[
\chi(g)=-\theta(a)
\]
on the non-semisimple Jordan classes.

\par Finally suppose that $g=\iota(\alpha)$
for some $\alpha\in E^\times\setminus \F_q^\times.
$ Note that \(g\) is not conjugate to any element of \(ZN\), because every
element of \(ZN\) has a repeated eigenvalue in \(\F_q\), whereas \(g\) has
irreducible characteristic polynomial.  Therefore
\[
\chi_{\Ind_{ZN}^{G_q}\eta}(g)=0.
\]
It suffices to compute $\chi_{\Ind_{E^\times}^{G_q}\theta}(g)$. By the induced character formula,
\[
\chi_{\Ind_{E^\times}^{G_q}\theta}(g)
=
\frac{1}{|E^\times|}
\sum_{\substack{x\in G_q\\ x^{-1}gx\in E^\times}}
\theta(x^{-1}gx).
\]
We need to understand the elements \(x\in G_q\) such that
\(x^{-1}gx\in E^\times\). Moreover,
if \(x^{-1}gx\in E^\times\), then it has the same characteristic
polynomial as \(g\) with roots \(\alpha\) and \(\alpha^q\). Thus we have that $x^{-1}gx
\in
\{\iota(\alpha),\iota(\alpha^q)\}$. First suppose that \(x^{-1}gx=\iota(\alpha)=g\). The centralizer of $g$ is $E^\times$ since $g$ is regular semisimple. Thus there are \(|E^\times|\) such elements, and their total contribution is $|E^\times|\theta(\alpha)$.

Next consider the elements \(x\in G\) such that
\[
x^{-1}gx=\iota(\alpha^q).
\]
Choose one \(\F_q\)-linear automorphism $ y\longmapsto y^q$ of \(E\) representing
the Frobenius map and identify it with an element \(\sigma\in G_q\). Note that $\sigma^{-1}=\sigma$. We have that
\[
\sigma^{-1}\iota(\alpha)\sigma
=
\iota(\alpha^q).
\]
Now, $x^{-1}gx= \iota(\alpha^q)$ if and only if
\[x^{-1}gx= \sigma^{-1}\iota(\alpha)\sigma \iff (x\sigma^{-1})^{-1}g(x\sigma^{-1})=\iota(\alpha)=g.\]
It follows that all solutions to \(x^{-1}gx=\iota(\alpha^q)\) form the coset \(E^\times\sigma\).  Once again, there are 
\(|E^\times|\) such elements, and their total contribution is
\[
|E^\times|\theta(\alpha^q).
\]
Therefore
\[
\sum_{\substack{x\in G\\ x^{-1}gx\in E^\times}}
\theta(x^{-1}gx)
=
|E^\times|\theta(\alpha)
+
|E^\times|\theta(\alpha^q).
\]
Dividing by \(|E^\times|\), we get
\[
\chi_{\Ind_{E^\times}^{G}\theta}(g)
=
\theta(\alpha)+\theta(\alpha^q).
\]
Since the \(ZN\)-induced character contributes \(0\), we conclude that
\[
\chi_\theta(g)
=
0-
\bigl(\theta(\alpha)+\theta(\alpha^q)\bigr)
=
-\theta(\alpha)-\theta(\alpha^q).
\]
This completes the proof.
\end{proof}
\section{Density results for average divisibility}
\par In this section, we shall prove the main result of the article. The first large nonzero block comes from evaluating principal series
representations on split regular semisimple conjugacy classes. In this
block, the character values have the form
\[
\chi_1(a)\chi_2(b)+\chi_1(b)\chi_2(a),
\]
which is a sum of two roots of unity.

\par Given a real valued function $f(q)$ and a positive function $g(q)$, we write 
\[f(q)=O(g(q))\] to mean that $\frac{|f(q)|}{g(q)}$ is bounded. Let $\epsilon$ be a positive real number and $g_\epsilon(q)$ be a positive function which depends on $\epsilon$. We write 
\[f(q)=O_\epsilon(g_\epsilon(q))\] to mean that there is a constant $C_\epsilon>0$ which depends on $\epsilon$ such that 
\[\frac{|f(q)|}{g_\epsilon(q)}<C_\epsilon\]for all values of $q$. 

\par The following lemma shows that the
entries in this block which are divisible by \(\ell\) are rare enough to
contribute only to the error term.

\begin{lemma}\label{lemma quad count}
    Let $\epsilon>0$. Then the number of ordered quadruples
\[
(\chi_1,\chi_2,a,b)\in
\left(\widehat{\F_q^\times}\right)^2\times(\F_q^\times)^2
\]
with \(\chi_1\neq \chi_2\) and \(a\neq b\), for which $\ell$ divides
\[
\chi_1(a)\chi_2(b)+\chi_1(b)\chi_2(a),
\]
is \(O_\epsilon(q^{3+\epsilon})\).
\end{lemma}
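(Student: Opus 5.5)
Write the value as $\chi_1(a)\chi_2(b)\bigl(1+\zeta\bigr)$ with
\[
\zeta=\frac{\chi_1(b)\chi_2(a)}{\chi_1(a)\chi_2(b)}=\chi(b/a),\qquad \chi:=\chi_1\chi_2^{-1}.
\]
Since $\chi_1(a)\chi_2(b)$ is a unit, $\ell$ divides the value exactly when $\ell$ divides $1+\zeta$. By Lemma \ref{lem:root-unity} this happens only if $\zeta=-1$, or $\ell=2$ and $\zeta=1$. So it suffices to count quadruples with $\chi(b/a)\in\{1,-1\}$, that is, with $\chi(b/a)^2=1$, i.e.\ $\chi^2(c)=1$ where $c:=b/a$.

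Next reparametrize. The map $(\chi_1,\chi_2,a,b)\mapsto(\chi,\chi_2,c,a)$ is a bijection of $\bigl(\widehat{\F_q^\times}\bigr)^2\times(\F_q^\times)^2$. Under it the conditions $\chi_1\ne\chi_2$ and $a\neq b$ become $\chi\neq 1$ and $c\ne 1$. The free parameters $\chi_2$ and $a$ contribute a factor $(q-1)^2$. Hence the count is at most
\[
(q-1)^2\cdot\#\{(\chi,c)\in\widehat{\F_q^\times}\times\F_q^\times:\ \chi^2(c)=1\}.
\]

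To bound the inner count, fix a generator $g$ of the cyclic group $\F_q^\times$ and identify $\chi$ with $j\in\Z/(q-1)$ via $\chi(g)=e^{2\pi i j/(q-1)}$, and $c=g^k$. The condition becomes $(q-1)\mid 2jk$. For each $k$, the number of valid $j$ is $\gcd(2k,q-1)\le 2\gcd(k,q-1)$. Summing over $k$ gives
\[
\sum_{k\bmod q-1}\gcd(k,q-1)=\sum_{d\mid q-1} d\,\varphi\!\left(\frac{q-1}{d}\right)\le (q-1)\,\tau(q-1).
\]
By the divisor bound $\tau(n)=O_\epsilon(n^{\epsilon})$, the inner count is $O_\epsilon(q^{1+\epsilon})$. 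The total is therefore $O_\epsilon(q^{3+\epsilon})$.

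The key step is the reduction to $\chi^2(c)=1$ via Lemma \ref{lem:root-unity}. After that, the only substantive estimate is the gcd-sum bound. There the $\epsilon$ loss from the divisor function is unavoidable in this approach, since $\sum_k\gcd(k,n)$ genuinely exceeds $n$ by a factor comparable to $\tau(n)$ in general.
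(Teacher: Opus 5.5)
Your proof is correct and is essentially the paper's argument: the same factorization $\chi_1(a)\chi_2(b)\bigl(1+\chi(b/a)\bigr)$, the same reduction via Lemma~\ref{lem:root-unity}, and the same $(q-1)^2$-to-one passage to pairs $(\chi,b/a)$. The only difference is cosmetic. You fix $c=g^k$ and sum $\gcd(2k,q-1)$, whereas the paper fixes $\chi$ of order $d$ and bounds the admissible $t$ by $2(q-1)/d$; both arrive at the bound $2(q-1)\sum_{d\mid q-1}\varphi(d)/d\le 2(q-1)\tau(q-1)$.
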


\begin{proof}
    Setting
\[
\rho:=\chi_1\chi_2^{-1}
\qquad\text{and}\qquad
t:=b/a,
\]
we find that
\[
\chi_1(a)\chi_2(b)+\chi_1(b)\chi_2(a)
=
\chi_1(a)\chi_2(b)\left(1+\frac{\chi_1(b)\chi_2(a)}
{\chi_1(a)\chi_2(b)}\right)
=
\chi_1(a)\chi_2(b)\bigl(1+\rho(t)\bigr).
\]
Since \(\chi_1(a)\chi_2(b)\) is a root of unity, it is a unit in the ring
of algebraic integers.  Therefore divisibility of \(\chi_\pi(g)\) by
\(\ell\) is equivalent to divisibility of $1+\rho(t)$ by \(\ell\). By Lemma \ref{lem:root-unity}, \(1+\rho(t)\) is divisible by \(\ell\) if and only if either:
\begin{itemize}
    \item $\rho(t)=-1$, or,
    \item \(\ell=2\) and $\rho(t)=1$.
\end{itemize}
We show that these cases occur for at most $O_\epsilon(q^{3+\epsilon})$ quadruples $(\chi_1, \chi_2, a,b)$.
\par We set $X:=\widehat{\F_q^\times}$. There are $(q-1)(q-2)$ ordered character pairs \((\chi_1,\chi_2)\) with \(\chi_1\neq\chi_2\) and $(q-1)(q-2)$ ordered element pairs \((a,b)\) with \(a\neq b\). For each fixed nontrivial character
\(\rho\in X\), there are exactly \(q-1\) ordered pairs
\((\chi_1,\chi_2)\) with $\chi_1\chi_2^{-1}=\rho$.  Likewise, for each fixed
\(t\in\F_q^\times\setminus\{1\}\), there are exactly \(q-1\) ordered pairs
\((a,b)\) with $b/a=t$. We call a pair
\[
(\rho,t)\in
\bigl(X\setminus\{1\}\bigr)
\times
\bigl(\F_q^\times\setminus\{1\}\bigr)
\]
exceptional if \(1+\rho(t)\) is divisible by \(\ell\). It remains to count the number of exceptional pairs.
Let \(d\) be the order of \(\rho\).  Then \(d>1\) and
\(d\mid (q-1)\).  The image of \(\rho\) is the cyclic group of \(d\)-th roots
of unity, and every value in the image has exactly $\frac{q-1}{d}$ preimages in \(\F_q^\times\).

First consider the condition \(\rho(t)=-1\).  This has solutions if and
only if \(-1\) belongs to the image of \(\rho\), equivalently if \(d\) is
even. The number of solutions is exactly $\frac{q-1}{d}$. If \(d\) is odd, there are no solutions.  In either case, the number of
solutions to \(\rho(t)=-1\) is at most $\frac{q-1}{d}$. Next consider the condition \(\rho(t)=1\), which only matters when
\(\ell=2\). The kernel of \(\rho\) has size $\frac{q-1}{d}$. Therefore, for a fixed character \(\rho\) of order \(d\), the number of
exceptional \(t\)'s is at most $2\left(\frac{q-1}{d}\right)$.
\par There are \(\varphi(d)\) characters $\rho$ of order \(d\) in
\(X\).  Hence the number of exceptional pairs
\((\rho,t)\) is at most
\[
2(q-1)\sum_{\substack{d\mid q-1\\ d>1}}\frac{\varphi(d)}{d}.
\]
Since \(\varphi(d)/d\leq 1\), this is bounded by
\[
2(q-1)\tau(q-1),
\]
where \(\tau(n)\) denotes the number of positive divisors of \(n\). It is a standard fact (see for example \cite[p.~10]{Murty}) that $\tau(n)=O_\epsilon(n^\epsilon)$. In particular,
\[\tau(q-1)=O_\epsilon(q^\epsilon).\]

\par Now we count the number of quadruples $(\chi_1, \chi_2, a, b)$ such that $\ell$ divides $1+\rho(t)$, where $\rho:=\chi_1\chi_2^{-1}$ and $t:=b/a$. By earlier remarks, each \((\rho,t)\)
arises from \((q-1)^2\) ordered quadruples
\((\chi_1,\chi_2,a,b)\).  Hence their count is at most
\[
2(q-1)^3\tau(q-1)=O_\epsilon(q^{3+\epsilon}).
\]
\end{proof}
\par We next analyze the second large nonzero block, namely the cuspidal
representations evaluated on elliptic conjugacy classes. The character
values in this block are controlled by expressions of the form
\[
-\theta(\alpha)-\theta(\alpha^q).
\]
As in the principal-series case, divisibility by \(\ell\) reduces to a
root-of-unity condition.  The relevant cyclic group is now
\(E^\times/\F_q^\times\), which has order \(q+1\).
\begin{lemma}\label{theta alpha count}
    Let $\epsilon>0$. Then the total number of pairs $(\theta, \alpha)$ where $\theta:E^\times \rightarrow \mathbb{C}^\times$ is a regular character and $\alpha\in E\setminus \F_q$, such that $\ell$ divides 
    \[\theta(\alpha)+\theta(\alpha^q)\] is at most $O_\epsilon(q^{3+\epsilon})$.
\end{lemma}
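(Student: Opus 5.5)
We follow the strategy of Lemma \ref{lemma quad count} and factor the character value as a unit times $1$ plus a root of unity. For $\alpha\in E\setminus\F_q$ we write
\[
\theta(\alpha)+\theta(\alpha^q)=\theta(\alpha)\bigl(1+\theta(\alpha^{q-1})\bigr).
\]
Since $\theta(\alpha)$ is a root of unity, hence a unit, $\ell$ divides the value if and only if $\ell$ divides $1+\theta(\alpha^{q-1})$. Set $\rho:=\theta^{q-1}$, meaning $\rho(\beta)=\theta(\beta^{q-1})$, and $u:=\alpha^{q-1}$. Then Lemma \ref{lem:root-unity} reduces the divisibility condition to two cases: either $\rho(\alpha)=-1$, or $\ell=2$ and $\rho(\alpha)=1$.

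The key observation is that $\rho$ is a character of $E^\times$ that is trivial on $\F_q^\times$, since $a^{q-1}=1$ for $a\in\F_q^\times$. Therefore $\rho$ factors through the cyclic group $C:=E^\times/\F_q^\times$ of order $q+1$. Moreover, $\rho$ is nontrivial exactly when $\theta$ is regular, because $\theta^{q}\neq\theta$ if and only if $\theta^{q-1}\neq 1$.

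We now fix $\theta$ and count the $\alpha$ satisfying the condition. Let $d>1$ be the order of $\rho$, viewed as a character of $C$, so $d\mid q+1$. Each value in the image of $\rho$ on $C$ has $(q+1)/d$ preimages in $C$, and each class in $C$ has $q-1$ lifts to $E^\times$. Hence the number of $\alpha\in E^\times$ with $\rho(\alpha)=\pm1$ is at most
\[
2(q-1)\frac{q+1}{d}\le q^2 .
\]
We could drop the restriction $\alpha\notin\F_q$, since it only shrinks the count.

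Summing this bound over all $\theta$ would give $q^2\cdot q^2$, which is too large. Instead we organize the sum by $\rho$. The map $\theta\mapsto\theta^{q-1}$ from $\widehat{E^\times}$ to $\widehat{C}$ has kernel consisting of the $q-1$ characters factoring through the norm. So each nontrivial $\rho$ of order $d$ arises from at most $q-1$ characters $\theta$, and $\widehat{C}$ contains $\varphi(d)$ characters of order $d$. The total count is therefore at most
\[
\sum_{\substack{d\mid q+1\\ d>1}}\varphi(d)\,(q-1)\cdot 2(q-1)\frac{q+1}{d}\le 2(q-1)^2(q+1)\,\tau(q+1).
\]
The standard bound $\tau(n)=O_\epsilon(n^\epsilon)$, already invoked in Lemma \ref{lemma quad count}, then gives $O_\epsilon(q^{3+\epsilon})$.

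The step that needs the most care is the bookkeeping of fibres. We must check that the fibres of $\theta\mapsto\theta^{q-1}$ have size $q-1$, which is the exact analogue of the $(q-1)$ pairs per $\rho$ in Lemma \ref{lemma quad count}. We must also check that the fibres of $E^\times\to C$ over the $\rho$-level set contribute the factor $(q-1)(q+1)/d$, so that the divisor sum collapses to $\tau(q+1)$ rather than something larger.
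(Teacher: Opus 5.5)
Your proposal is correct and follows essentially the same argument as the paper. You factor out the unit $\theta(\alpha)$, pass to the nontrivial character $\theta^{q-1}$ of the cyclic group $E^\times/\F_q^\times$ of order $q+1$, and bound the $\pm1$ level sets by $2(q+1)/d$ for a character of order $d$. Multiplying by the fibre sizes $q-1$ on both the character side and the element side then gives the same bound $2(q-1)^2(q+1)\tau(q+1)$. The only difference is the order of summation: you fix $\theta$ and then group by $\rho$, whereas the paper counts pairs $(\delta,u)\in\widehat Q\times Q$ and lifts each by $(q-1)^2$. Your stray definition $u:=\alpha^{q-1}$ is never used and can be dropped.
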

\begin{proof}
We write
\[
\theta(\alpha)+\theta(\alpha^q)
=
\theta(\alpha)
\left(1+\frac{\theta(\alpha^q)}{\theta(\alpha)}\right).
\]
Setting $\delta_\theta:=\theta^{q-1}$, we have that
\[
\theta(\alpha)+\theta(\alpha^q)
=\theta(\alpha)\bigl(1+\delta_\theta(\alpha)\bigr).
\]
The element \(\theta(\alpha)\) is a unit in the ring of algebraic integers. Thus divisibility by \(\ell\) is equivalent to divisibility of
\(1+\delta_\theta(\alpha)\) by \(\ell\). The character \(\delta_\theta\) is trivial on \(\F_q^\times\) and therefore factors through the quotient $Q:=E^\times/\F_q^\times$. This quotient is cyclic of order
\[
\frac{|E^\times|}{|\F_q^\times|}
=
\frac{q^2-1}{q-1}
=
q+1.
\]
Since \(\theta\) is regular, $\delta_\theta\neq 1$. Consider the map $F:\widehat{E^\times}\rightarrow \widehat Q$ which is defined by \[F(\theta):=\delta_\theta.\] The kernel of $F$ consists exactly
of the Frobenius-invariant characters, i.e., the characters $\theta$ satisfying
\(\theta^q=\theta\): there are $q-1$ such elements. Since \(E^\times\) has \(q^2-1\) characters,
the image of $F$ has size \(q+1\). This is precisely the number of characters of
the quotient \(Q=E^\times/\F_q^\times\).  Thus every character of \(Q\)
occurs as \(\delta_\theta\), and each occurs for exactly \(q-1\) choices of
\(\theta\). 

A cuspidal representation is determined by the
Frobenius orbit \(\{\theta,\theta^q\}\) of a regular character $\theta$, and an
elliptic conjugacy class is determined by the Frobenius orbit
\(\{\alpha,\alpha^q\}\), with
\(\alpha\in E^\times\setminus\F_q^\times\).  Both orbits have size \(2\).
Hence, if we count ordered pairs \((\theta,\alpha)\), then each entry
in the cuspidal/elliptic block is counted exactly four times. 

\par We now count the pairs $(\theta,\alpha)$ such that $1+\delta_\theta(\alpha)$ is divisible by $\ell$. Since \(\delta_\theta\) is trivial on \(\F_q^\times\), the value
\(\delta_\theta(\alpha)\) depends only on the coset
\(u=\alpha\F_q^\times\in Q\).  The condition
\(\alpha\notin\F_q^\times\) is equivalent to \(u\neq 1\).  Therefore the
ordered data relevant to the divisibility question reduce to pairs
\[
(\delta,u)\in
\bigl(\widehat Q\setminus\{1\}\bigr)\times \bigl(Q\setminus\{1\}\bigr).
\]
We call such a pair exceptional if \(1+\delta(u)\) is divisible by
\(\ell\).  By Lemma \ref{lem:root-unity}, this can happen only when:
\begin{itemize}
    \item \(\delta(u)=-1\), or, 
    \item \(\ell=2\) and
\(\delta(u)=1\).
\end{itemize}

Let \(d\) be the order of the nontrivial character \(\delta\).  Then
\(d\mid q+1\).  Since \(Q\) is cyclic of order \(q+1\), every value in the
image of \(\delta\) has exactly \((q+1)/d\) preimages in \(Q\).  Thus
\(\delta(u)=-1\) has no solutions unless \(d\) is even, and in all cases it
has at most \((q+1)/d\) solutions.  Similarly, the equation
\(\delta(u)=1\), after imposing \(u\neq 1\), has at most \((q+1)/d\)
solutions.  Hence, for a fixed \(\delta\) of order \(d\), there are at most
\(2(q+1)/d\) exceptional choices of \(u\).

For every $d|q+1$, there are \(\varphi(d)\) characters of order \(d\) in \(\widehat Q\).
The total number of exceptional pairs \((\delta,u)\) is at most
\[
2(q+1)\sum_{\substack{d\mid q+1\\ d>1}}\frac{\varphi(d)}{d}
\leq 2(q+1)\tau(q+1).
\]
Each pair \((\delta,u)\) lifts to exactly \((q-1)^2\) ordered pairs
\((\theta,\alpha)\): there are \(q-1\) choices of \(\theta\) with
\(\delta_\theta=\delta\), and \(q-1\) elements of \(E^\times\) lying above
the coset \(u\).  Hence the number of exceptional ordered pairs
\((\theta,\alpha)\) is at most
\[
2(q-1)^2(q+1)\tau(q+1)=O_\epsilon(q^{3+\epsilon}).
\]
\end{proof}

\par We now assemble the estimates from the two large nonzero blocks together
with the vanishing of the two large off-diagonal blocks.  The principal
series characters vanish on elliptic classes, while the cuspidal characters
vanish on split regular semisimple classes.  These two zero blocks give the
main term for \(M_0(q)\).  On the other hand, Lemmas
\ref{lemma quad count} and \ref{theta alpha count} show that almost all
entries in the two large nonzero blocks are not divisible by \(\ell\), and
therefore give the main term for \(N_\ell(q)\).

\begin{theorem}\label{explicit estimate theorem}
    Let $\ell$ be a prime number and $\epsilon>0$. Then, 
    \[N_\ell(q)=\frac{q^4}{2}+O\left(q^{3+\epsilon}\right)\]
    and 
    \[M_0(q)=\frac{q^4}{2}+O\left(q^{3+\epsilon}\right).\]
\end{theorem}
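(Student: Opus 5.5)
We partition the $(q^2-1)^2$ entries of the character table according to the $4\times 4$ block decomposition of Table~\ref{table: Representations and conjugacy classes}. The rows indexed by one-dimensional representations and Steinberg twists number $2(q-1)$. The columns indexed by scalar and Jordan classes also number $2(q-1)$. Every entry lying in one of these rows or columns is therefore among at most $2(q-1)(q^2-1)+2(q-1)(q^2-1)=O(q^3)$ entries. These entries are absorbed into the error term for both $N_\ell(q)$ and $M_0(q)$, since each such entry contributes at most $1$ to either count. It remains to analyse the four large blocks, whose rows are principal series or cuspidal representations and whose columns are split regular semisimple or elliptic classes. Each of these blocks has size $\frac{(q-1)(q-2)}{2}$ or $\frac{q(q-1)}{2}$ in each direction, so each block has $\frac{q^4}{4}+O(q^3)$ entries.

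For the zero count, Lemma~\ref{princseriescalc} shows that the principal-series/elliptic block vanishes identically, and Lemma~\ref{cuspcalclemma} shows the same for the cuspidal/split block. Together these give $\frac{(q-1)(q-2)}{2}\cdot\frac{q^2-q}{2}+\frac{q(q-1)}{2}\cdot\frac{(q-1)(q-2)}{2}=\frac{q^4}{2}+O(q^3)$ zero entries. Every zero entry is divisible by $\ell$, so none of these entries contribute to $N_\ell(q)$.

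In the principal-series/split block, the entries are $\chi_1(a)\chi_2(b)+\chi_1(b)\chi_2(a)$. Each entry corresponds to exactly four ordered quadruples $(\chi_1,\chi_2,a,b)$: two orderings of the characters and two of the eigenvalues. By Lemma~\ref{lemma quad count}, the number of entries divisible by $\ell$ is therefore at most $O_\epsilon(q^{3+\epsilon})$. Since zero is divisible by $\ell$, the same bound controls the zero entries in this block. Likewise, the cuspidal/elliptic entries $-\theta(\alpha)-\theta(\alpha^q)$ correspond to four ordered pairs $(\theta,\alpha)$ each, as noted in the proof of Lemma~\ref{theta alpha count}. That lemma then bounds the entries divisible by $\ell$, and in particular the zero entries, by $O_\epsilon(q^{3+\epsilon})$.

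Hence each of these two blocks contributes $\frac{q^4}{4}+O_\epsilon(q^{3+\epsilon})$ entries not divisible by $\ell$, and only $O_\epsilon(q^{3+\epsilon})$ zeros. Summing the contributions gives $N_\ell(q)=\frac{q^4}{2}+O_\epsilon(q^{3+\epsilon})$ and $M_0(q)=\frac{q^4}{2}+O_\epsilon(q^{3+\epsilon})$. No step is a real obstacle, since the substantive counting was done in the two lemmas. The only care needed is the bookkeeping between ordered tuples and table entries (the factor of four, and the restrictions $a\neq b$, $\chi_1\neq\chi_2$, and regularity of $\theta$), together with checking that the block sizes expand to $\frac{q^4}{4}+O(q^3)$.
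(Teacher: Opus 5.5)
Your proposal is correct and follows the paper's proof essentially step for step. You discard the $O(q^3)$ entries in the small rows and columns, take the main term of $M_0(q)$ from the two identically vanishing off-diagonal blocks, and use Lemmas \ref{lemma quad count} and \ref{theta alpha count} to show that only $O_\epsilon(q^{3+\epsilon})$ entries of the principal-series/split and cuspidal/elliptic blocks are divisible by $\ell$, and hence also only that many are zero. Your explicit factor-of-four bookkeeping between ordered tuples and table entries is slightly more careful than the paper, which leaves this implicit for the principal-series block.
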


\begin{proof}
\par We first prove the estimate for $N_\ell(q)$. From Tables \ref{table: Representations and conjugacy classes} and \ref{table: number of conjugacy classes}, one can see that the one-dimensional and Steinberg families together contain \(2(q-1)\) representations.  Since there are \(q^2-1\) conjugacy classes, the total number of entries in rows $1$ and $3$ of Table \ref{table: Representations and conjugacy classes} is \(O(q^3)\). Similarly, the scalar and Jordan classes together give \(2(q-1)\) columns and thus all entries in columns $1$ and $4$ together have density zero. Thus only the principal series and cuspidal rows evaluated on split regular semisimple and elliptic classes can affect the main term, whereas all other entries in Table \ref{table: Representations and conjugacy classes} contribute to the error term.
\par By Lemma \ref{princseriescalc}, principal series characters vanish on elliptic classes, and therefore, this block does not contribute to \(N_\ell(q)\). By Lemma \ref{cuspcalclemma}, cuspidal characters vanish on split regular semisimple classes, hence this block also does not contribute to \(N_\ell(q)\). It remains to analyze principal series characters on split regular semisimple classes and cuspidal characters on elliptic classes.
\par First, we analyze the character values in principal-series rows on the split regular semisimple
columns. Let $\pi=\Ind_B^{G_q}(\chi_1\otimes\chi_2)$ where $\chi_1\neq \chi_2$
and let $g\sim \diag(a,b)$ with $a,b\in \F_q^\times$ and $a\neq b$. By Lemma \ref{princseriescalc}, the relevant character value is
\[
\chi_\pi(g)
=
\chi_1(a)\chi_2(b)+\chi_1(b)\chi_2(a).
\] It follows from Lemma \ref{lemma quad count} that the total number of pairs $(\pi, g)$ such that $\ell$ divides $\chi_\pi(g)$ is at most $O_\epsilon(q^{3+\epsilon})$.

\par We now treat the cuspidal rows on the elliptic columns. Let $E=\F_{q^2}$ and let
\[
\theta:E^\times\to\C^\times
\]
be a regular character.  Let \(\pi_\theta\) be the corresponding cuspidal
representation, as explained in section \ref{s 2.2}. Let $\chi_\theta$ be the character of $\pi_\theta$. If $g\sim \iota(\alpha)$ with $\alpha\in E^\times\setminus \F_q^\times$,
then Lemma \ref{cuspcalclemma} gives
\[
\chi_{\theta}(g)
=
-\left(\theta(\alpha)+\theta(\alpha^q)\right).
\] According to Lemma \ref{theta alpha count}, there are at most $O_\epsilon(q^{3+\epsilon})$ pairs $(\theta, \alpha)$ for which $\ell$ divides $\chi_\theta(g)$.

Combining the preceding estimates, we have that 
\[N_\ell(q)=\frac{q^4}{2}+O_\epsilon(q^{3+\epsilon}).\]
The main contribution to \(M_0(q)\) comes from the two large blocks which
vanish identically.  Namely, principal-series characters vanish on elliptic
classes, and cuspidal characters vanish on split regular semisimple classes.
Thus these two blocks contribute
\[
2\cdot
\frac{(q-1)(q-2)}2
\cdot
\frac{q(q-1)}2
=
\frac{q(q-1)^2(q-2)}2
=
\frac{q^4}{2}+O(q^3)
\]
zero entries.

It remains to check that the number of entries outside of these blocks that vanish is $O_\epsilon(q^{3+\epsilon})$. The small rows and columns contribute \(O(q^3)\) entries in total.
In the principal-series/split and cuspidal/elliptic blocks, a zero entry is
in particular divisible by \(\ell\), so Lemmas \ref{lemma quad count} and
\ref{theta alpha count} show that the number of zero entries in these blocks is
\(O_\epsilon(q^{3+\epsilon})\). Hence, we find that
\[
M_0(q)=\frac{q^4}{2}+O_\epsilon(q^{3+\epsilon}).
\]This completes the proof.
\end{proof}
We now prove Theorem \ref{thm:main}.
\begin{proof}[Proof of Theorem \ref{thm:main}]
Fix \(0<\epsilon<1\). By the preceding theorem, we have
\[
N_\ell(q)=\frac{q^4}{2}+O_\epsilon(q^{3+\epsilon})
\]
as \(q\to\infty\) through odd prime powers. Since
\[
(q^2-1)^2=q^4-2q^2+1=q^4+O(q^2),
\]
it follows that
\[
\mathfrak d_\ell(q)
=
\frac{N_\ell(q)}{(q^2-1)^2}
=
\frac{\frac{q^4}{2}+O_\epsilon(q^{3+\epsilon})}
{q^4+O(q^2)}.
\]
Dividing numerator and denominator by \(q^4\), and using
\(q^{3+\epsilon}=o(q^4)\), we obtain
\[
\lim_{q\to\infty}\mathfrak d_\ell(q)=\frac12.
\]

It remains to compute the limiting proportion among the nonzero entries.
By the same preceding theorem,
\[
M_0(q)=\frac{q^4}{2}+O_\epsilon(q^{3+\epsilon}).
\]
Hence
\[
M_0'(q)
=
(q^2-1)^2-M_0(q)
=
\left(q^4+O(q^2)\right)
-
\left(\frac{q^4}{2}+O_\epsilon(q^{3+\epsilon})\right),
\]
and therefore
\[
M_0'(q)=\frac{q^4}{2}+O_\epsilon(q^{3+\epsilon}).
\]
Consequently
\[
\mathfrak a_\ell(q)
=
\frac{N_\ell(q)}{M_0'(q)}
=
\frac{\frac{q^4}{2}+O_\epsilon(q^{3+\epsilon})}
{\frac{q^4}{2}+O_\epsilon(q^{3+\epsilon})}.
\]
Again dividing by \(q^4\), we get
\[
\lim_{q\to\infty}\mathfrak a_\ell(q)=1.
\]

Finally, since the proportion of entries not divisible by \(\ell\) tends to
\(1/2\), the proportion of entries divisible by \(\ell\) is
\[
1-\mathfrak d_\ell(q),
\]
and hence also tends to \(1/2\). This proves the theorem.
\end{proof}

\section{Angular distribution for nonzero character values}

\par This section is devoted to the proof of Theorem \ref{thm:argument-equidistribution}. The proof will make use of the following variant of Weyl's criterion (see \cite[Chapter~1, Theorem~2.1]{KN}).
\begin{proposition}\label{prop:weyl-criterion}
For every odd prime power $q$, let \(A_q\) be a finite multiset of real numbers in \([0,2\pi]\). Suppose that for every nonzero integer \(n\), one has
\begin{equation}\label{Weyleqn}\lim_{q\rightarrow\infty}\left(\frac{1}{|A_q|}\sum_{\theta\in A_q} e^{in\theta}\right)=0
\end{equation}
as \(q\to\infty\). Then for any closed interval $I=[a,b]\subseteq [0, 2\pi]$ we have that 
\[\lim_{q\rightarrow \infty} \frac{\# \{\theta\in A_q\mid \theta\in I\}}{|A_q|}=\int_I d\theta=\frac{b-a}{2\pi}.\]
\end{proposition}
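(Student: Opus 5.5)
This is the classical Weyl criterion, with the Lebesgue measure normalized to total mass one on the circle (so $\int_I d\theta$ is read as $\frac{1}{2\pi}\int_a^b d\theta$). The plan is to run the standard approximation argument. Let $\mu_q$ be the probability measure $\frac{1}{|A_q|}\sum_{\theta\in A_q}\delta_\theta$, where the multiset is counted with multiplicity. Push it forward to the circle $\mathbb{R}/2\pi\Z$, identifying $0$ and $2\pi$; this is harmless because the endpoint discrepancy will be absorbed below. Let $\lambda$ be normalized Lebesgue measure $d\theta/2\pi$. The hypothesis \eqref{Weyleqn} says $\int e^{in\theta}\,d\mu_q\to 0=\int e^{in\theta}\,d\lambda$ for every $n\neq 0$. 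For $n=0$ the two sides agree trivially. By linearity, $\int P\,d\mu_q\to\int P\,d\lambda$ for every trigonometric polynomial $P$.

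Next I extend the convergence to continuous functions. Let $f$ be continuous and $2\pi$-periodic. By Fej\'er's theorem (or Stone--Weierstrass), for every $\eta>0$ there is a trigonometric polynomial $P$ with $\|f-P\|_\infty<\eta$. Since $\mu_q$ and $\lambda$ are both probability measures,
\[
\left|\int f\,d\mu_q-\int f\,d\lambda\right|\le 2\eta+\left|\int P\,d\mu_q-\int P\,d\lambda\right|.
\]
Hence $\limsup_q$ of the left side is at most $2\eta$, and so $\int f\,d\mu_q\to\int f\,d\lambda$.

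The main step is to pass from continuous functions to the indicator $\mathbf 1_I$. Fix $\eta>0$. Choose continuous periodic functions $g_-\le \mathbf 1_I\le g_+$ with $0\le g_\pm\le 1$ and $\int(g_+-g_-)\,d\lambda<\eta$. Take $g_+$ to equal $1$ on $[a,b]$ and to decay linearly to $0$ on intervals of length $\eta\pi/2$ outside $I$. Take $g_-$ to be supported inside $[a,b]$, equal to $1$ away from those same short intervals at the ends. If $b-a<\eta\pi$, simply take $g_-=0$. The case where $I$ reaches $0$ or $2\pi$ needs no special treatment, because $g_+$ wraps around the circle, and the point $0\equiv 2\pi$ lies inside the support of $g_+$ and is accounted for by the upper bound. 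Then
\[
\int g_-\,d\mu_q\le \mu_q(I)\le\int g_+\,d\mu_q .
\]
Letting $q\to\infty$ and using the previous step gives
\[
\frac{b-a}{2\pi}-\eta\le\liminf_q\mu_q(I)\le\limsup_q\mu_q(I)\le\frac{b-a}{2\pi}+\eta.
\]
Since $\eta>0$ was arbitrary, $\mu_q(I)\to\frac{b-a}{2\pi}$.

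The only delicate point is this sandwiching. It is where we use that $\lambda$ gives zero mass to the endpoints $a$ and $b$, and to the identified point $0\equiv 2\pi$. This is what allows a closed interval, and values equal to $2\pi$ in $A_q$, to be handled.
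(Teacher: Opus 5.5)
Your proof is correct and follows essentially the same route as the paper: reduce to trigonometric polynomials, extend to continuous functions by uniform approximation, then pass to the interval $I$. The only difference is that the paper cites the Portmanteau theorem for the last step, whereas you carry out the sandwich $g_-\le\mathbf 1_I\le g_+$ by hand. Your restriction to continuous \emph{periodic} functions and your treatment of the point $0\equiv 2\pi$ are slightly more careful than the paper, which applies Stone--Weierstrass to arbitrary continuous functions on $[0,2\pi]$.
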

\begin{proof}
We note that the statement of the result does not follow directly from Weyl's criterion, but the proof is similar. For $\theta\in [0, 2\pi]$ let $\delta_\theta$ be the Dirac measure supported at $\theta$. Let \(\nu_q\) be the probability measure on \([0,2\pi]\) defined by
\[
\nu_q
:=
\frac{1}{|A_q|}
\sum_{\theta\in A_q}\delta_\theta,
\]
where multiplicities are included. Given a set $J\subseteq [0, 2\pi]$, we have that 
\[\nu_q(J)=\frac{|A_q\cap J|}{|A_q|}.\]
\par Let \(m\) denote normalized Lebesgue
measure on \([0,2\pi]\), so that
\[
dm(\theta)=\frac{d\theta}{2\pi}.
\]
By \eqref{prop:weyl-criterion},
\[
\lim_{q\rightarrow\infty}\left(\int_{0}^{2\pi} e^{in\theta}\,d\nu_q(\theta)\right)= 0
\]
for every $n\in \Z\setminus \{0\}$. Since also
\[
\int_{0}^{2\pi} 1\,d\nu_q(\theta)=1=\int_{0}^{2\pi}1\,dm(\theta),
\]
this is equivalent to saying that
\[
\lim_{q\rightarrow\infty}\left(\int_{0}^{2\pi} e^{in\theta}\,d\nu_q(\theta)
\right)=
\int_{0}^{2\pi} e^{in\theta}\,dm(\theta)
\]
for every \(n\in\mathbb Z\).

\par By linearity, the same convergence holds for every trigonometric
polynomial.  We now pass from trigonometric polynomials to arbitrary
continuous functions on the circle by uniform approximation.  Let \(f\) be a
continuous function on $[0,2\pi]$. By the Stone--Weierstrass theorem,
for every \(\epsilon>0\) there exists a trigonometric polynomial \(P\) such
that
\[
\|f-P\|_\infty<\epsilon.
\]
Since \(\nu_q\) and \(m\) are probability measures, we have
\[
\left|\int_0^{2\pi} (f-P)(\theta)\,d\nu_q(\theta)\right|< \epsilon
\quad\text{and}\quad
\left|\int_0^{2\pi} (f-P)(\theta)\,dm(\theta)\right|< \epsilon.
\]
Therefore
\[
\left|
\int_0^{2\pi} f(\theta)\,d\nu_q(\theta)
-
\int_0^{2\pi} f(\theta)\,dm(\theta)
\right|
<
2\epsilon+
\left|
\int_0^{2\pi} P(\theta)\,d\nu_q(\theta)
-
\int_0^{2\pi} P(\theta)\,dm(\theta)
\right|.
\]
The last term tends to \(0\) as \(q\to\infty\), since \(P\) is a
trigonometric polynomial.  Hence
\[
\limsup_{q\to\infty}
\left|
\int_0^{2\pi} f(\theta)\,d\nu_q(\theta)
-
\int_0^{2\pi} f(\theta)\,dm(\theta)
\right|
<2\epsilon.
\]
Since \(\epsilon>0\) is arbitrary, it follows that
\[
\lim_{q\rightarrow\infty}\int_0^{2\pi} f(\theta)\,d\nu_q(\theta)
=
\int_0^{2\pi} f(\theta)\,dm(\theta)
\]
for every continuous function \(f\) on $[0,2\pi]$.  Thus \(\nu_q\)
converges weakly to normalized Lebesgue measure on the circle.

\par Recall that a subset $J$ is called an $m$-continuity set if its boundary has Lebesgue measure $0$. It follows from the Portmanteau theorem (see \cite[Theorem~2.1, (i) implies (v)]{Billingsley}) that 
\[\lim_{q\rightarrow \infty} \nu_q(J)=m(J).\] Since $I$ is an \(m\)-continuity set, we deduce that
\[
\lim_{q\to\infty}\nu_q(I)=m(I)=\frac{b-a}{2\pi}.
\]This completes the proof.
\end{proof}
Let $q$ be an odd prime and $\Irr(G_q)$ be the set of isomorphism classes of complex irreducible representations of $G_q$. The group of characters $X_q:=\widehat{\F_q^\times}$ acts on \(\Irr(G_q)\) by twisting:
\[
\eta(\pi):=\pi\otimes(\eta\circ\det),
\]
where $\eta\in X_q$ and $\pi\in \Irr(G_q)$. 

\begin{lemma}\label{lem:twisting-cuspidal-parameters}
Let \(E=\F_{q^2}\), let \(\theta:E^\times\to \C^\times\) be a regular
character and $\pi_\theta$ denote the associated cuspidal representation. Let \(\eta:\F_q^\times\to\C^\times\) be a character.  Then
\[
\pi_\theta\otimes(\eta\circ\det)
\simeq
\pi_{\theta\cdot(\eta\circ N_{E/\F_q})}.
\]
\end{lemma}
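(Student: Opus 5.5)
The plan is to compare characters, since two representations of a finite group are isomorphic exactly when their characters agree on every conjugacy class. Write $\theta':=\theta\cdot(\eta\circ N_{E/\F_q})$. The first step is to check that $\theta'$ is again regular, so that $\pi_{\theta'}$ is defined. Since $N_{E/\F_q}(\alpha^q)=N_{E/\F_q}(\alpha)$, the character $\eta\circ N_{E/\F_q}$ is Frobenius-invariant. Hence $(\theta')^q=\theta^q\cdot(\eta\circ N_{E/\F_q})$, and this equals $\theta'$ if and only if $\theta^q=\theta$. So $\theta'$ is regular because $\theta$ is.

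Next I compare the characters of both sides on the four families of conjugacy classes, using Lemma \ref{cuspcalclemma} for $\pi_\theta$ and $\pi_{\theta'}$. The character of the left-hand side is $g\mapsto \eta(\det g)\chi_\theta(g)$.

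On a scalar class $aI$, the left side gives $\eta(a^2)(q-1)\theta(a)$. The right side gives $(q-1)\theta'(a)=(q-1)\theta(a)\eta(N_{E/\F_q}(a))$. For $a\in\F_q^\times$ we have $N_{E/\F_q}(a)=a^{q+1}=a^2$, so the two agree.

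On a split regular semisimple class, both sides vanish.

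On a Jordan class $\mtx{a}{1}{0}{a}$, the left side is $-\eta(a^2)\theta(a)$. The right side is $-\theta'(a)=-\theta(a)\eta(a^2)$, so again they agree.

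On an elliptic class $\iota(\alpha)$, we have $\det\iota(\alpha)=N_{E/\F_q}(\alpha)=\alpha^{q+1}$, as in the one-dimensional lemma. The left side is therefore $-\eta(N(\alpha))\bigl(\theta(\alpha)+\theta(\alpha^q)\bigr)$. The right side is $-\theta(\alpha)\eta(N(\alpha))-\theta(\alpha^q)\eta(N(\alpha^q))$. These coincide because $N(\alpha^q)=N(\alpha)$.

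Since the characters agree on all classes, and both sides are genuine irreducible representations, the isomorphism follows. (The left side is irreducible because twisting by a one-dimensional character preserves irreducibility.)

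An alternative route is to work with the virtual-representation definition directly. One would use $\Ind_H^G(\sigma)\otimes\lambda\simeq\Ind_H^G(\sigma\otimes\lambda|_H)$ for each of the two inductions. This needs $\eta\circ\det$ restricted to $ZN$ to be $aI\,u_x\mapsto\eta(a^2)$, and its restriction to $E^\times$ to be $\eta\circ N_{E/\F_q}$. However, the character comparison is cleaner, given Lemma \ref{cuspcalclemma}.

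There is no real obstacle here. The only point needing care is the identification $\det\circ\iota=N_{E/\F_q}$ together with its restriction to scalars, $N_{E/\F_q}(a)=a^2$. Both are already used earlier in the paper.
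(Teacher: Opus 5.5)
Your argument is correct, but it is not the paper's route: the paper's proof is exactly what you list as the ``alternative route''. The paper twists each term of the virtual construction $\pi_\theta=\Ind_{ZN}^{G_q}\varphi_\theta-\Ind_{E^\times}^{G_q}\theta$ using $\Ind_H^{G_q}(\sigma)\otimes\lambda\simeq\Ind_H^{G_q}(\sigma\otimes\lambda|_H)$. It then only has to check that $\eta\circ\det$ restricts to $\eta\circ N_{E/\F_q}$ on $E^\times$, and to $aIu_x\mapsto\eta(a^2)=(\eta\circ N_{E/\F_q})(a)$ on $ZN$.

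You instead take the finished character table of Lemma \ref{cuspcalclemma} and verify $\eta(\det g)\chi_\theta(g)=\chi_{\theta'}(g)$ on each of the four families of classes. All four checks are right, and the identity $N_{E/\F_q}(\alpha^q)=N_{E/\F_q}(\alpha)$ handles the elliptic case.

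The paper's argument is structural. It never needs the character values, it gives an identity of virtual representations for arbitrary $\theta$, and it would carry over to any construction built from induced representations. Yours is a direct verification that depends on the earlier computation. Its advantage is that you explicitly check that $\theta'$ is again regular; the paper leaves this implicit, but it is needed for $\pi_{\theta'}$ to be one of the cuspidal representations.

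Your final appeal to irreducibility is unnecessary, since equality of characters already forces isomorphism.
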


\begin{proof}
We first recall a general elementary fact about induction.  Let
\(H\leq G_q\), let \(\sigma\) be a representation of \(H\), and let
\(\lambda\) be a one-dimensional representation of \(G_q\).  Then
\[
\Ind_H^{G_q}(\sigma)\otimes \lambda
\simeq
\Ind_H^{G_q}\bigl(\sigma\otimes \lambda|_H\bigr),
\]
see \cite[Exercise 3.16, p.~34]{fultonharris}. We apply this with \(\lambda=\eta\circ\det\).  Recall that the cuspidal
representation attached to \(\theta\) is constructed as the virtual
representation
\[
\pi_\theta
=
\Ind_{ZN}^{G_q}\varphi_\theta
-
\Ind_{E^\times}^{G_q}\theta,
\]
where
\[
\varphi_\theta(aIu_x)=\theta(a)\psi(x),
\qquad a\in\F_q^\times,\ x\in\F_q, \quad \text{ and } \quad u_x=
\begin{pmatrix}
1 & x\\
0 & 1
\end{pmatrix}.
\]
We claim that twisting both induced terms by \(\eta\circ\det\) replaces
\(\theta\) by
\[
\theta'=\theta\cdot(\eta\circ N_{E/\F_q}).
\]

First consider the \(E^\times\)-term.  For \(\alpha\in E^\times\), viewed
as an element of \(G_q\) via the embedding \(\iota:E^\times\hookrightarrow
G_q\), one has
\[
\det(\iota(\alpha))=N_{E/\F_q}(\alpha).
\]
Therefore
\[
(\eta\circ\det)|_{E^\times}
=
\eta\circ N_{E/\F_q}.
\]
By the induction-tensor compatibility recalled above,
\[
\Ind_{E^\times}^{G_q}\theta\otimes(\eta\circ\det)
\simeq
\Ind_{E^\times}^{G_q}
\bigl(\theta\cdot(\eta\circ N_{E/\F_q})\bigr)
=
\Ind_{E^\times}^{G_q}\theta'.
\]

Next consider the \(ZN\)-term.  For \(a\in\F_q^\times\) and \(u_x\in N\),
we have
\[
\det(aIu_x)=a^2.
\]
On the other hand, regarding \(a\) as an element of
\(\F_q^\times\subset E^\times\), one has
\[
N_{E/\F_q}(a)=a^{q+1}=a^2,
\]
since \(a^q=a\).  Thus
\[
(\eta\circ\det)(aIu_x)
=
\eta(a^2)
=
(\eta\circ N_{E/\F_q})(a).
\]
Hence
\[
\varphi_\theta(aIu_x)\,(\eta\circ\det)(aIu_x)
=
\theta(a)\psi(x)\eta(a^2)
=
\theta'(a)\psi(x).
\]
This is precisely the character \(\varphi_{\theta'}\) of \(ZN\).  Therefore
\[
\Ind_{ZN}^{G_q}\varphi_\theta\otimes(\eta\circ\det)
\simeq
\Ind_{ZN}^{G_q}\varphi_{\theta'}.
\]

Combining the two terms in the virtual construction of \(\pi_\theta\), we
obtain
\[
\pi_\theta\otimes(\eta\circ\det)
\simeq
\Ind_{ZN}^{G_q}\varphi_{\theta'}
-
\Ind_{E^\times}^{G_q}\theta'
=
\pi_{\theta'}.
\]
Since \(\theta'=\theta\cdot(\eta\circ N_{E/\F_q})\), this proves the lemma.
\end{proof}

Denote by $\op{Stab}(\pi)$ the stabilizer of $\pi$, consisting of $\eta\in X_q$ such that $\eta(\pi)\simeq \pi$.

\begin{lemma}\label{mainlemmalastsection}
    Let \(\mathcal B_q\subset
\Irr(G_q)\) be the set of the one-dimensional representations,
the Steinberg twists, and those principal-series or cuspidal representations $\pi$ for which $\op{Stab}(\pi)$ is nontrivial. Then we have that \[
|\mathcal B_q|=O(q).
\]
\end{lemma}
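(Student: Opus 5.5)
The one-dimensional representations and the Steinberg twists each number \(q-1\) (Section \ref{s 2.2}), so together they contribute \(2(q-1)=O(q)\) elements to \(\mathcal B_q\). The remaining work is to show that only \(O(q)\) principal series and cuspidal representations have a nontrivial stabilizer under twisting. In both cases the plan is the same: translate \(\pi\otimes(\eta\circ\det)\simeq\pi\) into an equation on the parameters, then count solutions.

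\emph{Principal series.} Using the induction--tensor compatibility from the proof of Lemma \ref{lem:twisting-cuspidal-parameters}, we have
\[
\Ind_B^{G_q}(\chi_1\otimes\chi_2)\otimes(\eta\circ\det)\simeq\Ind_B^{G_q}(\chi_1\eta\otimes\chi_2\eta).
\]
By the classification in Section \ref{s 2.2}, this is isomorphic to \(\pi\) exactly when \(\{\chi_1\eta,\chi_2\eta\}=\{\chi_1,\chi_2\}\). We split into two cases:
\begin{itemize}
\item If \(\chi_1\eta=\chi_1\), then \(\eta=1\).
\item If \(\chi_1\eta=\chi_2\) and \(\chi_2\eta=\chi_1\), then \(\eta=\chi_2\chi_1^{-1}\) and \(\eta^2=1\).
\end{itemize}
So a nontrivial stabilizer forces \(\rho:=\chi_1\chi_2^{-1}\) to be the unique quadratic character of \(\F_q^\times\); this is where \(q\) odd is used. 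Once \(\rho\) is fixed, \(\chi_2\) determines \(\chi_1\), so there are at most \(q-1\) such ordered pairs, and hence \(O(q)\) such representations.

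\emph{Cuspidal.} By Lemma \ref{lem:twisting-cuspidal-parameters} and the criterion \(\pi_\theta\simeq\pi_{\theta'}\iff\theta'\in\{\theta,\theta^q\}\), a nontrivial stabilizer means that for some \(\eta\neq1\) we have \(\theta\cdot(\eta\circ N)\in\{\theta,\theta^q\}\), where \(N=N_{E/\F_q}\).
\begin{itemize}
\item The case \(\theta\cdot(\eta\circ N)=\theta\) gives \(\eta\circ N=1\). This is impossible because \(N:E^\times\to\F_q^\times\) is surjective.
\item The remaining case is \(\theta^{q-1}=\eta\circ N\). In the notation of Lemma \ref{theta alpha count}, \(\delta_\theta=\theta^{q-1}\) is trivial on \(\F_q^\times\). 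On the other hand, \(\eta\circ N\) restricted to \(\F_q^\times\) is \(\eta(a^2)=\eta^2(a)\). Hence \(\eta^2=1\), so \(\eta\) is the quadratic character.
\end{itemize}
Therefore \(\delta_\theta\) is forced to equal the single character \(\eta_0\circ N\), where \(\eta_0\) is the quadratic character. Lemma \ref{theta alpha count} shows that each character of \(Q\) arises as \(\delta_\theta\) for exactly \(q-1\) choices of \(\theta\). So at most \(q-1\) regular \(\theta\) qualify, giving \(O(q)\) cuspidal representations.

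Adding the four contributions gives \(|\mathcal B_q|=O(q)\). The step most likely to need care is the cuspidal case: one must check that \(\eta\circ N\), restricted to \(\F_q^\times\), equals \(\eta^2\), and that this single constraint pins \(\delta_\theta\) down to one character. The counting itself then follows directly from Lemma \ref{theta alpha count}.
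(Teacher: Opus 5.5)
Your proposal is correct and follows the paper's proof essentially step for step. This includes the reduction to $\eta^2=1$ in both the principal-series and cuspidal cases, and the use of surjectivity of the norm to rule out $\theta\cdot(\eta\circ N)=\theta$. The only cosmetic difference is that for the final cuspidal count you cite the fiber size $q-1$ of $\theta\mapsto\theta^{q-1}$ from Lemma \ref{theta alpha count}, whereas the paper re-derives the same bound directly from the kernel of that homomorphism.
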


\begin{proof}
The one-dimensional representations and the Steinberg twists together contribute
\(2(q-1)\) rows. We now count the remaining rows with nontrivial stabilizer. 
\par Let $\pi=\Ind_B^{G_q}(\chi_1\otimes\chi_2)$ with $\chi_1\neq\chi_2$ be a principal-series representation. Suppose that \(\pi\) has nontrivial
stabilizer under twisting.  Then there exists a nontrivial character
\(\eta\in X_q\) such that
\[
\Ind_B^{G_q}(\chi_1\eta\otimes\chi_2\eta)
\simeq
\Ind_B^{G_q}(\chi_1\otimes\chi_2).
\]
Using the standard equivalence relation for principal series, this implies
\[
\{\chi_1\eta,\chi_2\eta\}=\{\chi_1,\chi_2\}.
\]
Since \(\eta\neq 1\), the equality cannot hold with
\(\chi_1\eta=\chi_1\) and \(\chi_2\eta=\chi_2\). Therefore one must have $\chi_1\eta=\chi_2$
and $\chi_2\eta=\chi_1$. It follows that \(\eta^2=1\). Since \(q\) is odd, there is a unique
nontrivial quadratic character $\eta_2$ of \(\F_q^\times\). Thus the principal-series
rows with nontrivial stabilizer are precisely of the form $\Ind_B^{G_q}(\chi\otimes \chi\eta_2)$
where \(\eta_2\) is the quadratic character. There are at most \(q-1\) such representations.
\par We next consider the cuspidal representation with nontrivial stabilizer. Let $E=\F_{q^2}$, and let
\(\pi_\theta\) be the cuspidal representation attached to a regular
character $\theta:E^\times\longrightarrow \C^\times$. Assume that there exists a nontrivial character $\eta\in X_q$ such that $\eta(\pi_\theta)\simeq \pi_\theta$. By Lemma \ref{lem:twisting-cuspidal-parameters} we have that:
\[
\pi_\theta\otimes(\eta\circ\det)
\simeq
\pi_{\theta\cdot(\eta\circ N_{E/\F_q})},
\]
and therefore,
\[
\pi_{\theta\cdot(\eta\circ N_{E/\F_q})}\simeq \pi_\theta.
\]
This means that
\[
\theta\cdot(\eta\circ N_{E/\F_q})=\theta
\quad\text{or}\quad
\theta\cdot(\eta\circ N_{E/\F_q})=\theta^q.
\]
The first equality gives \(\eta\circ N_{E/\F_q}=1\).  Since the norm map
\(N_{E/\F_q}:E^\times\to\F_q^\times\) is surjective, this implies
\(\eta=1\), contrary to our assumption.  Thus the second equality must hold:
\[
\eta\circ N_{E/\F_q}=\theta^q\theta^{-1}.
\]
Restricting this equality to \(\F_q^\times\subset E^\times\), we obtain
\[
\eta(a^2)=1
\qquad\text{for all }a\in\F_q^\times,
\]
because \(\theta^q(a)=\theta(a)\) for \(a\in\F_q^\times\).  It follows that \(\eta^2=1\), and therefore \(\eta\) must be the unique quadratic character \(\eta_2\) of $\F_q^\times$.

It remains to count the possible \(\theta\) satisfying $\theta^q\theta^{-1}=\eta_2\circ N_{E/\F_q}$.  Consider the homomorphism
\[
\widehat{E^\times}\longrightarrow \widehat{E^\times},
\qquad
\theta\longmapsto \theta^{q-1}.
\]Let $\theta_1$ and $\theta_2$ be characters of $E^\times$ such that
\[\theta_1^{q-1}=\theta_2^{q-1}=\eta_2\circ N_{E/\F_q}.\] Then $\alpha:=\theta_1\theta_2^{-1}$ is in the kernel of the above homomorphism. This kernel consists exactly of the Frobenius-invariant characters and has size \(q-1\).
Suppose that $\theta_1$ is a solution to 
$\theta_1^{q-1}=\eta_2\circ N_{E/\F_q}$, then any other solution is of the form $\theta_1\alpha$. Since there are at most $q-1$ choices for $\alpha$ there are consequently at most $q-1$ choices for $\theta$
satisfying
$
\theta^q\theta^{-1}=\eta_2\circ N_{E/\F_q}.$
Passing from regular characters \(\theta\) to cuspidal representations
\(\pi_\theta\) can only decrease this number.  Hence there are at most $O(q)$ cuspidal representations with
nontrivial stabilizer. This proves that
\[
|\mathcal B_q|=O(q).
\]
\end{proof}

\begin{proof}[Proof of Theorem \ref{thm:argument-equidistribution}]
Let \(\mathscr C_q\) denote the set of conjugacy classes of \(G_q\). For a complex number \(z\), write
\[
u(z):=\begin{cases}
    e^{i\op{arg}(z)}&\text{ if } z\neq 0;\\
    0&\text{ if }z=0.\\
\end{cases}
\]
Let \(A_q\) be the multiset of arguments
\[
A_q
:=
\left\{
\arg(\chi_\pi(C)) \mid
\pi\in\Irr(G_q),\ C\in\mathscr C_q,\ \chi_\pi(C)\neq 0
\right\},
\]
where each pair \((\pi,C)\) with \(\chi_\pi(C)\neq 0\) is counted once.  Thus
\[
|A_q|=M_0'(q),
\]
the number of nonzero entries in the character table of $G_q$. By Theorem \ref{explicit estimate theorem},
\[
M_0'(q)=\frac{q^4}{2}+O_\epsilon(q^{3+\epsilon}).
\] 

\par By Proposition \ref{prop:weyl-criterion}, it is enough to show that for every \(n\in\Z\setminus\{0\}\),
\begin{equation}\label{Weyllimit}\lim_{q\rightarrow \infty}\left(\frac{1}{M_0'(q)}\sum_{\substack{\pi\in \Irr(G_q)\\ C\in\mathscr C_q}}u(\chi_\pi(C))^n\right)=0.\end{equation}
Fixing $n$, we prove the stronger estimate
\begin{equation}\label{Sqeqn}
S_q(n):=
\left|\sum_{\substack{\pi\in \Irr(G_q)\\ C\in\mathscr C_q}}
u(\chi_\pi(C))^n\right|\leq C_0|n|q^3,
\end{equation}
where $C_0>0$ is an absolute constant.
Since \(M_0'(q)\sim q^4/2\) and $n$ is fixed, the limit \eqref{Weyllimit} must vanish.
\par If \(C\in\mathscr C_q\), then \(\det(g)\) is independent of the choice of
\(g\in C\); we denote this common value by \(\det C\).  For every
\(\pi\in\Irr(G_q)\), every \(\eta\in X_q\), and every \(C\in\mathscr C_q\),
one has
\[
\chi_{\pi\otimes(\eta\circ\det)}(C)
=
\eta(\det C)\chi_\pi(C),
\]
and therefore,
\begin{equation}\label{twistingreln}u\left(\chi_{\pi\otimes(\eta\circ\det)}(C)\right)^n
=
\eta(\det C)^n u(\chi_\pi(C))^n.
\end{equation}
\par We recall from the statement of Lemma \ref{mainlemmalastsection} that $\mathcal{B}_q$ consists of one-dimensional representations,
the Steinberg twists, and those principal-series or cuspidal representations $\pi$ for which $\op{Stab}(\pi)$ is nontrivial. Since each term
\(u(\chi_\pi(C))^n\) has absolute value $\leq 1$, we have that
\[
\left|\sum_{\substack{\pi\in \mathcal{B}_q\\ C\in\mathscr C_q}}
u(\chi_\pi(C))^n\right|\leq |\mathcal{B}_q||\mathscr{C}_q|. \]
Note that \(|\mathscr C_q|=q^2-1\) and by Lemma \ref{mainlemmalastsection}, $|\mathcal B_q|=O(q)$. Therefore, we find that 
\[
|\mathcal{B}_q||\mathscr{C}_q|
=
O(q^3).
\]
%and in particular it follows that 
%\[\lim_{q\rightarrow \infty}\left(\frac{1}{M_0'(q)}\sum_{\substack{\pi\in \mathcal B_q\\ C\in\mathscr C_q}}u(\chi_\pi(C))^n\right)=0.\]
Therefore, \eqref{Sqeqn} follows if we prove that 
\begin{equation}\label{toproveeqn}
\left|\sum_{\substack{\pi\in \mathcal \Irr(G_q)\setminus \mathcal B_q\\ C\in\mathscr C_q}}
u(\chi_\pi(C))^n\right|\leq C_1 |n|q^3
\end{equation}
for some absolute constant $C_1>0$. 

\par Since representations outside $\mathcal{B}_q$ have trivial stabilizer, $\Irr(G_q)\setminus \mathcal B_q$ decomposes into free \(X_q\)-orbits $\op{Orb}_q=\op{Orb}_{X_q}\left(\Irr(G_q)\setminus \mathcal B_q\right)$. For each orbit $\cO$, fix a representation $\pi_{\cO}\in \cO$. From \eqref{twistingreln}, we deduce that
\begin{equation}\label{abovesum}\sum_{\substack{\pi\in \mathcal \Irr(G_q)\setminus \mathcal B_q\\ C\in\mathscr C_q}}
u(\chi_\pi(C))^n=\sum_{\mathcal O\in \op{Orb}_q}\sum_{C\in \mathscr{C}_q} \left(\sum_{\eta\in X_q} \eta(\det C)^n\right)
u(\chi_{\pi_\cO}(C))^n.\end{equation}
We note that $|X_q|=q-1$. By the orthogonality of characters of the finite abelian group
\(\F_q^\times\),
\begin{equation}\label{orthrelns}\sum_{\eta\in X_q}\eta(\det C)^n
=
\begin{cases}
q-1, & (\det C)^n=1,\\
0, & (\det C)^n\neq 1.
\end{cases}
\end{equation}
Consequently, a free orbit can contribute in the column \(C\) only if $(\det C)^n=1$.
Let $\mu_{|n|}$ be the $|n|$-th roots of unity in $\F_q$. Observe that \[|\op{Orb}_q|\leq \frac{|\op{Irr}(G_q)|}{|X_q|}=\frac{q^2-1}{q-1}=q+1.\]From \eqref{abovesum} and \eqref{orthrelns}, we find that
\[\begin{split}\left|\sum_{\substack{\pi\in \mathcal \Irr(G_q)\setminus \mathcal B_q\\ C\in\mathscr C_q}}
u(\chi_\pi(C))^n\right|
\leq &(q-1)\sum_{\xi\in \mu_{|n|}}\sum_{\cO\in \op{Orb}_q} \sum_{\substack{C\in \mathscr{C}_q\\ \det C=\xi}}
|u(\chi_{\pi_\cO}(C))|^n\\
\leq & (q-1)\sum_{\xi\in \mu_{|n|}}\sum_{\cO\in \op{Orb}_q} \sum_{\substack{C\in \mathscr{C}_q\\ \det C=\xi}}1\\
=&(q-1)|\op{Orb}_q|\sum_{\xi\in \mu_{|n|}}\#\{C\in \mathscr{C}_q\mid \det C=\xi\}\\
\leq &(q^2-1)\sum_{\substack{\xi\in \mu_{|n|}\\t\in \F_q}}\#\{C\in \mathscr{C}_q\mid \op{trace}C=t,\quad \det C=\xi\}\\
\leq &|n|q(q^2-1)\op{max}_{\substack{\xi\in \mu_{|n|}\\t\in \F_q}}\#\{C\in \mathscr{C}_q\mid \op{trace}C=t,\quad \det C=\xi\}.
\end{split}\]
For fixed $d\in \F_q^\times$ and $t\in \F_q$, we claim that
\[\#\{C\in \mathscr{C}_q\mid \op{trace}C=t,\quad \det C=d\}\leq 2.\] 
The characteristic polynomial \[\op{ch}_C(X)=X^2-tX+d\] has a repeated root whenever $t^2-4d=0$. First consider the case when $\op{ch}_C(X)$ is irreducible over $\F_q$ and let $\alpha\in \F_{q^2}\setminus \F_q$ be a root. Then $\alpha$ and $\alpha^q$ are the roots of $\op{ch}_C(X)$ and $C$ is conjugacy class of either $\iota(\alpha)$ or $\iota(\alpha^q)$. Thus there are two choices for $C$. Next, assume that $\op{ch}_C(X)$ splits in $\F_q$ and let $a,b\in \F_q$ be its roots up to multiplicity. If $a\neq b$ then $C$ is the conjugacy class of $\op{diag}(a,b)$. On the other hand, if $a=b$ then $C$ is the conjugacy class of $\mtx{a}{0}{0}{a}$ or $\mtx{a}{1}{0}{a}$. Thus in either case, there are at most $2$ choices for $C$. This proves the claim. 

Therefore, we have shown that 
\[\left|\sum_{\substack{\pi\in \mathcal \Irr(G_q)\setminus \mathcal B_q\\ C\in\mathscr C_q}}
u(\chi_\pi(C))^n\right|\leq 2 |n|q^3,\]
thus proving \eqref{toproveeqn} and the estimate \eqref{Sqeqn}. This completes the proof of the Theorem.
\end{proof}

\bibliographystyle{alpha}
\bibliography{references}

\end{document}